\renewcommand{\le}{\leqslant}
\renewcommand{\ge}{\geqslant}
\renewcommand{\leq}{\leqslant}
\renewcommand{\geq}{\geqslant}
\newcommand{\et}{\quad\text{and}\quad}
\newcommand{\real}{\mathds R}
\newcommand{\Pp}{\mathds P}
\newcommand{\Ee}{\mathds E}
\newcommand{\I}{\mathds 1}
\newcommand{\var}{\textmd{Var}}
\newcommand{\Bscr}{\mathscr{B}}
\newcommand{\rd}{{\mathds R^d}}
\newcommand{\id}{\operatorname{id}}
\newcommand{\scalp}[2]{\langle #1,\,#2\rangle}
\theoremstyle{plain}
\newtheorem{theorem}{Theorem}[section]
\newtheorem{lemma}[theorem]{Lemma}
\newtheorem{proposition}[theorem]{Proposition}
\theoremstyle{definition}
\newtheorem{example}[theorem]{Example}
\newtheorem{remark}[theorem]{Remark}
\newtheorem*{ack}{Acknowledgement}
\newtheorem*{notation}{Notation}
\begin{document}
\allowdisplaybreaks

\title[A Unified Approach to Coupling SDEs with L\'{e}vy Noise]{A Unified Approach to Coupling SDEs driven by L\'{e}vy Noise and some Applications}

\author[M.~Liang]{Mingjie Liang}
\address[M.~Liang]{%
College of Information Engineering, Sanming
University, 365004 Sanming, P.R. China}
\email{liangmingjie@aliyun.com}

\author[R.L.~Schilling]{Ren\'{e} L.\ Schilling}
\address[R.\ Schilling]{%
TU Dresden, Fakult\"{a}t Mathematik, Institut f\"{u}r Mathematische Stochastik, 01062 Dresden, Germany}
\email{rene.schilling@tu-dresden.de}

\author[J.~Wang]{Jian Wang}
\address[J.~Wang]{%
College of Mathematics and Informatics \& Fujian Key Laboratory of Mathematical Analysis and Applications (FJKLMAA), Fujian Normal University, 350007 Fuzhou, P.R. China}
\email{jianwang@fjnu.edu.cn}

\keywords{L\'{e}vy process; coupling operator; coupling by reflection; refined basic coupling; optimal coupling.}
\subjclass[2010]{60J35; 60G51; 60H10; 60J25; 60J75.}

\maketitle

\begin{abstract}
    We present a general method to construct couplings of stochastic differential equations driven by L\'{e}vy noise in terms of coupling operators. This approach covers both coupling by reflection and refined basic coupling which are often discussed in the literature. As an application, we establish regularity results for the transition semigroups of the solutions to stochastic differential equations driven by additive L\'{e}vy noise.
\end{abstract}

Coupling is a well-known powerful tool in the study of Markov processes, see \cite{Ligg,Lin,Tho}. It has been efficiently used to show regularity properties of Markov semigroups and ergodicity of Markov processes. There are many publications on coupling of diffusion processes, see for instance \cite{LR, CL, PW, HS, BK} and the references therein, but only few papers consider the coupling of jump processes. The first systematic investigations on coupling of L\'evy processes are \cite{SW, BSW, SSW} and \cite[Chapter 6.2]{BSW1}, but -- compared to the diffusion case -- the theory is still in its infancy.

In this paper, we consider $d$-dimensional stochastic differential equations (SDEs) driven by additive pure jump L\'evy noise
\begin{equation}\label{s1}
    dX_t
    = b(X_{t})\,dt+ dZ_t,\quad X_0=x\in \rd,
\end{equation}
where $b:\rd\to\rd$ is a measurable function and $Z=(Z_t)_{t\geq 0}$ is a pure jump L\'{e}vy process on $\rd$. We assume that the SDE \eqref{s1} has a unique strong solution $X=(X_t)_{t\geq 0}$. This holds, for example, if $b$ satisfies the local Lipschitz and linear growth conditions, see \cite[Chapter~IV.9]{ike-wat}, or if $b$ is H\"{o}lder continuous and $Z$ a L\'evy process satisfying some moment condition for the L\'evy measure  at zero and at infinite and  such that its transition semigroups enjoy certain regularity properties, see e.g.\ \cite{CSZ, kuehn-rs, Po1, Zhang1}. It is easy to see that the generator of $X$ is given by \begin{equation}\label{e:ope}
    Lf(x)
    = \scalp{\nabla f(x)}{b(x)} + \int_\rd \left[f(x+u)-f(x)-\scalp{\nabla f(x)}{u}\I_{(0,1)}(|u|)\right]\nu(du),
\end{equation}
where $\nu$ is the L\'evy measure of the pure jump L\'{e}vy process $Z$.

We have two aims in mind: First, we want to find a uniform formulation for coupling of the SDE \eqref{s1} -- this serves as model case for more general SDEs with multiplicative noise, see Section~\ref{subsec-multi}; this is done using the concept of coupling operators covering all currently known couplings for L\'evy processes. The other aim is to establish new regularity results for the transition semigroups of the solution of the SDE \eqref{s1} -- and, in particular, for L\'evy processes -- illustrating the power of the coupling and coupling operator method when applied to L\'evy processes.

\begin{notation}
    Most of our notation is standard or self-explanatory. L\'evy measures $\nu(du)$ and L\'evy kernels $\nu(x,du)$ are, as usual, defined on $\rd\setminus\{0\}$; for simplicity we will not make this explicit in our notation and keep writing $\int_{\rd}\dots\nu(du)$ etc. By $a\wedge b$ we denote the minimum of $a$ and $b$, and agree that ``$\wedge$'', when combined with ``$+$'' or ``$-$'', takes precedence over these operations, i.e.\ $a\pm a\wedge b = a \pm (a\wedge b)$.
\end{notation}

\section{Coupling operators for SDEs with additive L\'evy noise}\label{section1}

Let $L$ be a linear operator from $C_b^2(\rd)$ to $B(\real)$. Recall that the tensor product of two functions $f,g:\rd\to\real$ is the function $f\otimes g(x,y):= f(x)g(y)$.  Following \cite[Chapter 2.1]{Chen} we call a linear operator $\widetilde L:C_b^{2}(\real^{2d})\to B(\real)$ a \emph{coupling operator} with marginal $L$, if
\begin{align*}
    \widetilde L (f\otimes \I)(x,y) &= Lf(x) &&\text{for all $x,y\in \rd$ and $f\in C^2_b(\rd)$},\\
    \widetilde L (\I\otimes g)(x,y) &= Lg(y) &&\text{for all $x,y\in \rd$ and $g\in C^2_b(\rd)$}.
\end{align*}
Typically, $L$ will be the infinitesimal generator of a Markov process. The main purpose of this section is to study a general formula for the coupling operator $\widetilde Lf$.

Assume, for a moment, that $L$ is the generator of a Feller process $X=(X_t)_{t\geq 0}$ such that the test functions $C_c^\infty(\rd)$ are contained in the domain $D(L)$. It is well-known, cf.\ \cite{BSW1}, that $\overline{C_c^\infty(\rd)}^{\|\cdot\|_\infty}\subset D(L)$ and $Lf$, $f\in C_c^\infty(\rd)$, is necessarily of the form
\begin{align*}
     Lf(x)
     &= \scalp{\nabla f(x)}{b(x)} + \frac 12\mathop{\mathrm{div}} Q(x)\nabla f(x)\\
     &\qquad\mbox{}+ \int_\rd \left[f(x+u)-f(x)-\scalp{\nabla f(x)}{u}\I_{(0,1)}(|u|)\right]\nu(x,du);
\end{align*}
here, $(b(x),Q(x),\nu(x,du))$ is for every fixed $x\in\rd$ a L\'evy triplet, i.e.\ $b(x)\in\rd$, $Q(x)\in\real^{d\times d}$ is positive semidefinite, $\int_\rd [1\wedge |u|^2]\,\nu(x,du)<\infty$ and all expressions are measurable and locally bounded in $x$.

Therefore, the following Ansatz  provides a natural candidate for a coupling operator related to \eqref{e:ope}: for any $f\in C_b^2(\real^{2d})$,
\begin{equation}\label{e:gencou}
\begin{split}
    \widetilde L f (x,y)
    &=\scalp{\nabla_xf(x,y)}{b(x)} + \scalp{\nabla_yf(x,y)}{b(y)}\\
    &\qquad\mbox{} + \int_{\rd\times \rd} \Big[f(x+u,y+v)-f(x,y)-\scalp{\nabla_x f(x,y)}{u} \I_{(0,1)}(|u|)\\
    &\qquad\qquad\qquad\mbox{  } -\scalp{\nabla_y f(x,y)}{v} \I_{(0,1)}(|v|)\Big]\,\widetilde\nu(x,y,du,dv),
\end{split}\end{equation}
where $\nabla_xf(x,y)$ and $\nabla_yf(x,y)$ denote the gradient of $f(x,y)$ with respect to $x$ and $y$, and $\widetilde\nu(x,y,du,dv)$ is a L\'evy-type kernel, i.e.\ a measure on $\real^{2d}\setminus\{0\}$ satisfying
\begin{equation}\label{e:note}
    \int_{\real^{2d}} \left[1\wedge (|u|^2+|v|^2)\right] \widetilde\nu(x,y,du,dv)<\infty, \quad x,y\in\rd.
\end{equation}

\begin{lemma}\label{L:lem}
    The operator $\widetilde L$ defined by \eqref{e:gencou} is a coupling operator with marginal operator $L$ of the form \eqref{e:ope}, if and only if, $\widetilde\nu(x,y,du,dv)$ satisfies for all $A,B\in\Bscr(\rd\setminus\{0\})$ and $x,y\in\rd$ the following conditions
    \begin{equation}\label{e:p1}
        \widetilde\nu(x,y,A\times \rd)= \nu(A), \quad
        \widetilde\nu(x,y,\rd\times B)= \nu(B).
    \end{equation}
\end{lemma}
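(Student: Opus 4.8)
The plan is to reduce both implications to a single explicit computation of $\widetilde L(f\otimes\I)$ and $\widetilde L(\I\otimes g)$, followed by a measure-uniqueness argument for the nontrivial direction. For $f\in C_b^2(\rd)$ put $F:=f\otimes\I$; then $\nabla_xF(x,y)=\nabla f(x)$, $\nabla_yF(x,y)=0$ and $F(x+u,y+v)-F(x,y)=f(x+u)-f(x)$, so substituting $F$ into \eqref{e:gencou} makes the $v$-dependence disappear and
\begin{align*}
    \widetilde L(f\otimes\I)(x,y)
    &=\scalp{\nabla f(x)}{b(x)}\\
    &\quad\mbox{}+\int_{\rd\times\rd}\bigl[f(x+u)-f(x)-\scalp{\nabla f(x)}{u}\I_{(0,1)}(|u|)\bigr]\,\widetilde\nu(x,y,du,dv).
\end{align*}
The integrand depends on $u$ only and vanishes at $u=0$, so this double integral equals $\int_{\rd\setminus\{0\}}[\,\cdots\,]\,\mu_{x,y}(du)$, where $\mu_{x,y}(A):=\widetilde\nu(x,y,A\times\rd)$, $A\in\Bscr(\rd\setminus\{0\})$, is a measure which is finite on sets bounded away from the origin (if $|u|\geq\varepsilon$ on $A$ then $A\times\rd\subset\{|u|^2+|v|^2\geq\varepsilon^2\}$, so \eqref{e:note} applies), hence Radon on $\rd\setminus\{0\}$. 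An analogous identity holds for $\widetilde L(\I\otimes g)(x,y)$ with the second marginal $B\mapsto\widetilde\nu(x,y,\rd\times B)$. All integrals above converge because Taylor's formula gives $|f(x+u)-f(x)-\scalp{\nabla f(x)}{u}\I_{(0,1)}(|u|)|\leq C_f(1\wedge|u|^2)$, and $\int(1\wedge|u|^2)\,\nu(du)<\infty$ while $\widetilde\nu$ satisfies \eqref{e:note}.

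Given this, the ``if'' part is immediate: if \eqref{e:p1} holds then $\mu_{x,y}=\nu$, and the displayed formula becomes precisely $Lf(x)$ as in \eqref{e:ope}; symmetrically $\widetilde L(\I\otimes g)(x,y)=Lg(y)$, so $\widetilde L$ is a coupling operator with marginal $L$.

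For the ``only if'' part -- the one genuine step -- assume $\widetilde L$ is a coupling operator with marginal $L$, and fix $x,y\in\rd$. Comparing the formula above with \eqref{e:ope} yields
\[
    \int_{\rd\setminus\{0\}}\bigl[f(x+u)-f(x)-\scalp{\nabla f(x)}{u}\I_{(0,1)}(|u|)\bigr]\,(\mu_{x,y}-\nu)(du)=0,\qquad f\in C_b^2(\rd).
\]
Specialising to $f(z)=h(z-x)$ with $h\in C_c^\infty(\rd\setminus\{0\})$ arbitrary, the function $f$ vanishes in a neighbourhood of $x$, so $f(x)=0$ and $\nabla f(x)=0$ and the bracket reduces to $h(u)$; hence $\int h\,d\mu_{x,y}=\int h\,d\nu$ for all such $h$. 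Since $\mu_{x,y}$ and $\nu$ are positive Radon measures on the locally compact space $\rd\setminus\{0\}$, the uniqueness part of the Riesz representation theorem gives $\mu_{x,y}=\nu$, i.e.\ $\widetilde\nu(x,y,A\times\rd)=\nu(A)$ for all $A\in\Bscr(\rd\setminus\{0\})$; running the same argument with $\I\otimes g$ gives $\widetilde\nu(x,y,\rd\times B)=\nu(B)$, and since $x,y$ were arbitrary this is \eqref{e:p1}. I expect the only points needing care to be the Radon/local-finiteness check that legitimises the Riesz step, together with the choice of test functions $f$ that cleanly isolates an arbitrary $h$; everything else is a bookkeeping computation.
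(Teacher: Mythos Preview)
Your proof is correct and follows essentially the same route as the paper: compute $\widetilde L(f\otimes\I)$ explicitly, then test against compactly supported smooth functions vanishing at the evaluation point to identify the first marginal of $\widetilde\nu(x,y,\cdot,\cdot)$ with $\nu$, and symmetrically for the second marginal. Your translation $f(z)=h(z-x)$ with $h\in C_c^\infty(\rd\setminus\{0\})$ makes the argument work uniformly at every $(x,y)$, which is a slightly more explicit rendering of the paper's choice to evaluate at $(0,y)$ with $f\in C_c^2(\rd\setminus\{0\})$; the Radon/Riesz justification you give is likewise just a more detailed version of the paper's appeal to ``$C_c^2(\rd\setminus\{0\})$ is measure-determining''.
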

\begin{proof}
By \eqref{e:gencou} we have for any $f\in C_b^2(\rd)$,
\begin{align*}
    \widetilde L (f\otimes \I)(x,y)
    &= \scalp{\nabla f(x)}{b(x)}\\
    &\quad\mbox{} + \int_{\rd\times \rd} \left[f(x+u)-f(x)-\scalp{\nabla  f(x)}{u} \I_{(0,1)}(|u|)\right] \widetilde\nu(x,y,du,dv).
\end{align*}
Let $f\in C_c^2(\rd\setminus\{0\})$. We have
\begin{gather*}
    \widetilde L (f\otimes\I)(0,y)
    = \int_{\rd\times \rd} f(u)\,\widetilde\nu(x,y,du,dv)
    \et
    Lf(0) = \int_\rd f(u)\,\nu(du).
\end{gather*}
Since $\widetilde L (f\otimes \I) = Lf$, we get the first equality in \eqref{e:p1} since the family $C_c^2(\rd\setminus\{0\})$ is measure-determining on $\rd\setminus\{0\}$. The second equality follows in a similar way.

Let us show that \eqref{e:p1} is also sufficient. For any $f\in C_b^2(\rd)$ and $x\in \rd$, set $F_x(u):=f(x+u)-f(x)-\scalp{\nabla f(x)}{u} \I_{(0,1)}(|u|)$. By definition, $F_x(u)\in C_b(\rd)$ with $F_x(0)=0$. Thus, \eqref{e:p1} along with a standard approximation argument yields
\begin{gather*}
    \int_{\rd\times \rd} F_x(u)\,\widetilde \nu(x,y,du,dv)
    = \int_{\rd} F_x(u)\,\nu(du).
\intertext{Similarly, we get in the other coordinate direction}
    \int_{\rd\times \rd} F_y(v)\,\widetilde \nu(x,y,du,dv)
    = \int_{\rd} F_y(v)\,\nu(dv).
\end{gather*}
Hence, $\widetilde L$ defined by \eqref{e:gencou} is a coupling operator with marginal operator $L$.
\end{proof}

The condition \eqref{e:p1} for a coupling operator is stronger than the requirement \eqref{e:note} for general L\'evy-type operators. This means that the class of L\'evy-type coupling operators is smaller than the class of L\'evy-type operators -- but to-date we are not aware of a structural characterization of general L\'evy-type coupling operators, and we have to restrict ourselves to concrete examples.

To proceed, we need some further notation. For any bi-measurable function $f:\rd\to \rd$ and $A\in \Bscr(\rd)$, we define
\begin{gather*}
    (\nu \circ f)(A)=\nu(f(A))
    \et
    \mu_{\nu,f}=\nu\wedge (\nu \circ f);
\end{gather*}
the minimum of two measures $\nu_1$ and $\nu_2$ on $(\rd,\Bscr(\rd))$ is defined as $\nu_1\wedge \nu_2=\nu_1-(\nu_1-\nu_2)^+$ where $(\nu_1-\nu_2)^{\pm}$ are the positive resp.\ negative parts from the Hahn-Jordan decomposition of the signed measure $\nu_1-\nu_2$. For any $1\leq i<n+1\leq \infty$, let $\nu_i$ be a nonnegative measure on $(\rd,\Bscr(\rd))$ such that $\sum_{i=1}^n \nu_i\leq \nu$, and $\Psi_i:\rd\to \rd$ a bijective and continuous mapping, i.e.\ $\Psi_i$ is invertible and continuous satisfying $\Psi_i(\rd)=\rd$. In particular, $\Psi_i$ is bi-measurable from $\rd$ to $\rd$. For any $f\in C_b^2(\real^{d}\times\rd)$ and $x,y\in \rd$, we set
\begin{equation}\label{cp1}\begin{split}
    \widetilde{L} f(x,y)
    &=\scalp{\nabla_xf(x,y)}{b(x)} + \scalp{\nabla_yf(x,y)}{b(y)}\\
    &\quad\mbox{} + \sum_{i=1}^n\int_{\real^{d}}\Big[ f(x+z,y+ \Psi_i(z))-f(x,y)-\scalp{\nabla_xf(x,y)}{z} \I_{(0,1)}(|z|)\\
    &\quad\qquad\quad\mbox{} -\scalp{\nabla_yf(x,y)}{\Psi_i(z)} \I_{(0,1)}(|\Psi_i(z)|)\Big]\,\mu_{\nu_i,\Psi_i}(dz)\\
    &\quad\mbox{}+\int_{\real^{d}}\Big[f(x+z,y+z)-f(x,y)-\scalp{\nabla_xf(x,y)}{z} \I_{(0,1)}(|z|)\\
    &\quad\qquad\quad\mbox{}-\scalp{\nabla_yf(x,y)}{z} \I_{(0,1)}(|z|)\Big] \Big(\nu -\sum_{i=1}^n\mu_{\nu_i,\Psi_i}\Big)(dz).
\end{split}\end{equation}

\begin{proposition}\label{P:co} If
\begin{equation}\label{e:cou}
    \sum_{i=1}^n\mu_{\nu_i,\Psi_i}=\sum_{i=1}^n\mu_{\nu_i,\Psi_i^{-1}},
\end{equation}
then, the operator $ \widetilde{L}$ defined by \eqref{cp1} is a coupling operator with marginal operator $L$ given by \eqref{e:ope}.
\end{proposition}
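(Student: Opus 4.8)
The plan is to reduce the statement to Lemma~\ref{L:lem} by recognising the sum in \eqref{cp1} as an instance of the general Ansatz \eqref{e:gencou} for an explicitly constructed L\'evy-type kernel, and then to verify the marginal conditions \eqref{e:p1}. First I would introduce the maps $\Phi_i\colon\rd\to\real^{2d}$, $\Phi_i(z)=(z,\Psi_i(z))$, and $\Delta\colon\rd\to\real^{2d}$, $\Delta(z)=(z,z)$, and set
\[
    \widetilde\nu(x,y,\cdot\,)
    := \sum_{i=1}^n (\Phi_i)_*\mu_{\nu_i,\Psi_i}
     + \Delta_*\Big(\nu-\sum_{i=1}^n\mu_{\nu_i,\Psi_i}\Big),
\]
which does not depend on $x,y$. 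Since $\mu_{\nu_i,\Psi_i}=\nu_i\wedge(\nu_i\circ\Psi_i)\le\nu_i$ and $\sum_{i=1}^n\nu_i\le\nu$ (monotone convergence handling $n=\infty$), the measure $\nu-\sum_i\mu_{\nu_i,\Psi_i}$ is nonnegative, so $\widetilde\nu$ is a genuine measure on $\real^{2d}\setminus\{0\}$; the truncation terms in \eqref{cp1} are exactly the pullbacks under $\Phi_i$ resp.\ $\Delta$ of those in \eqref{e:gencou}, so \eqref{cp1} coincides with \eqref{e:gencou} for this $\widetilde\nu$. Its integrability \eqref{e:note} I would get from $1\wedge(a+b)\le(1\wedge a)+(1\wedge b)$ and the substitution $w=\Psi_i(z)$: noting that by the convention $(\nu\circ f)(A)=\nu(f(A))$ the measure $\nu_i\circ\Psi_i$ is the image of $\nu_i$ under $\Psi_i^{-1}$, one has $\int(1\wedge|\Psi_i(z)|^2)\,\mu_{\nu_i,\Psi_i}(dz)\le\int(1\wedge|w|^2)\,\nu_i(dw)\le\int(1\wedge|w|^2)\,\nu(dw)<\infty$, and the remaining contributions are dominated by $\nu$ directly.

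Next I would verify \eqref{e:p1} for $\widetilde\nu$, which by Lemma~\ref{L:lem} finishes the proof. For the first marginal, fix $A\in\Bscr(\rd\setminus\{0\})$; then $(\Phi_i)_*\mu_{\nu_i,\Psi_i}(A\times\rd)=\mu_{\nu_i,\Psi_i}(A)$ and $\Delta_*(\,\cdot\,)(A\times\rd)$ evaluates the bracketed measure on $A$, so the two occurrences of $\sum_i\mu_{\nu_i,\Psi_i}(A)$ cancel and $\widetilde\nu(x,y,A\times\rd)=\nu(A)$; note that no hypothesis is used here. For the second marginal, fix $B\in\Bscr(\rd\setminus\{0\})$; here $(\Phi_i)_*\mu_{\nu_i,\Psi_i}(\rd\times B)=\mu_{\nu_i,\Psi_i}(\Psi_i^{-1}(B))$, and the algebraic heart of the argument is the identity
\[
    \mu_{\nu_i,\Psi_i}\big(\Psi_i^{-1}(B)\big)=\mu_{\nu_i,\Psi_i^{-1}}(B),
    \qquad\text{i.e.}\qquad
    (\Psi_i)_*\mu_{\nu_i,\Psi_i}=\mu_{\nu_i,\Psi_i^{-1}}.
\]
Granting this, $\widetilde\nu(x,y,\rd\times B)=\sum_i\mu_{\nu_i,\Psi_i^{-1}}(B)+\nu(B)-\sum_i\mu_{\nu_i,\Psi_i}(B)$, which equals $\nu(B)$ exactly because of \eqref{e:cou} --- the single place where that hypothesis enters, which makes its role transparent.

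The step I expect to be the main obstacle (though it is short) is the identity $(\Psi_i)_*\mu_{\nu_i,\Psi_i}=\mu_{\nu_i,\Psi_i^{-1}}$. I would prove it via the fact that pushing measures forward under a bimeasurable bijection $T$ commutes with the lattice minimum, $T_*(\mu_1\wedge\mu_2)=(T_*\mu_1)\wedge(T_*\mu_2)$: the map $T_*$ is linear and order-preserving with order-preserving inverse $(T^{-1})_*$, hence it carries the Hahn--Jordan decomposition of $\mu_1-\mu_2$ to that of $T_*\mu_1-T_*\mu_2$. Applying this with $T=\Psi_i$ and using $(\Psi_i)_*(\nu_i\circ\Psi_i)=(\Psi_i)_*(\Psi_i^{-1})_*\nu_i=\nu_i$ and $(\Psi_i)_*\nu_i=\nu_i\circ\Psi_i^{-1}$ gives $(\Psi_i)_*\big(\nu_i\wedge(\nu_i\circ\Psi_i)\big)=\nu_i\wedge(\nu_i\circ\Psi_i^{-1})=\mu_{\nu_i,\Psi_i^{-1}}$. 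The only real pitfall in all of this is keeping the direction of the convention $(\nu\circ f)(A)=\nu(f(A))$ straight, since $\nu\circ f$ is the image of $\nu$ under $f^{-1}$ and not under $f$; once that is fixed, the rest is routine substitution.
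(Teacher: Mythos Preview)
Your proposal is correct and follows essentially the same route as the paper: write $\widetilde\nu$ as the sum of the pushforwards of $\mu_{\nu_i,\Psi_i}$ under $z\mapsto(z,\Psi_i(z))$ plus the diagonal push of the remainder (the paper phrases this with $\delta_{\Psi_i(u)}(dv)$ and $\delta_u(dv)$), and then invoke Lemma~\ref{L:lem}; the key identity $(\Psi_i)_*\mu_{\nu_i,\Psi_i}=\mu_{\nu_i,\Psi_i^{-1}}$ is exactly the paper's $(\mu_{\nu_i,\Psi_i}\circ\Psi_i^{-1})(A)=\mu_{\nu_i,\Psi_i^{-1}}(A)$, which you go on to justify in more detail than the paper does.
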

\begin{proof}
Set
\begin{gather*}
    \widetilde \nu(x,y,du,dv)
    = \sum_{i=1}^n \mu_{\nu_i,\Psi_i}(du)\,\delta_{\Psi_i(u)}(dv)
    + \Big(\nu -\sum_{i=1}^n\mu_{\nu_i,\Psi_i}\Big)(du)\,\delta_{u}(dv).
\end{gather*}
The operator $\widetilde{L}$ defined by \eqref{cp1} is of the form \eqref{e:gencou} with the L\'evy type kernel $\widetilde \nu(x,y,du,dv)$ shown above. It is clear that we have $\widetilde \nu(x,y,A\times \rd)=\nu(A)$ for any $x, y\in \rd$ and $A\in \Bscr(\rd\setminus\{0\})$.  On the other hand, we have
\begin{gather*}
    (\mu_{\nu_i,\Psi_i}\circ\Psi_i^{-1})(A)
    = \mu_{\nu_i,\Psi_i^{-1}}(A)
    \quad\text{for all $A\in\Bscr(\rd\setminus\{0\})$ and $1\leq i< n+1$}.
\end{gather*}
Together with \eqref{e:cou} this yields that for $x, y\in \rd$ and $B\in \Bscr(\rd\setminus\{0\})$, $\widetilde \nu(x,y, \rd\times B)=\nu(B)$. The claim follows from Lemma~\ref{L:lem}.
\end{proof}

The coupling operator $\widetilde{L}$ defined by \eqref{cp1} can be uniquely described by the drift $b(x)$ and the following jump system
\begin{equation}\label{basic-coup-4}
(x,y)\longmapsto
    \begin{cases}
    (x+z, y+\Psi_i(z)), & \mu_{\nu_i,\Psi_i}(dz) \text{\ \ for\ \ } 1\leq i< n+1;\\
    (x+z, y+z), & \big(\nu -\sum_{i=1}^n \mu_{\nu_i,\Psi_i} \big)(dz).
    \end{cases}
\end{equation}
We will adopt this description throughout the rest of the paper.

\begin{remark}
    In most applications one needs a pathwise realization of the coupling in form of a Markov process, that is a $2d$-dimensional Markov process $(X_t,Y_t)_{t\geq 0}$ such that $(X_t-X_0)_{t\geq 0}$ and $(Y_t-Y_0)_{t\geq 0}$ are Markov processes with infinitesimal generator $L$. Clearly, if $(X_t,Y_t)_{t\geq 0}$ exists, then the coupling operator $\widetilde L$ (with the generator $L$ as marginal operator) is indeed the infinitesimal generator of $(X_t,Y_t)_{t\geq 0}$. The converse is more of a problem: from the mere definition of a coupling operator $\widetilde L$ we cannot immediately deduce the existence of an associated Markov process -- we refer to \cite{BSW1} for an exhaustive discussion of the existence of processes generated by L\'evy-type operators.

    In general, one needs a further argument to deduce the existence of a coupling process $(X_t,Y_t)_{t\geq 0}$.
    For diffusions, the well-posedness of the associated martingale problem is the method of choice, see \cite[Sections 2 and 3]{CL} and \cite[Section 2]{PW}, see also \cite[Section 3.1]{Wang10} and \cite[Section 2.2]{Wang} for the L\'evy case.

    In the present context, all processes are given by SDEs, so it is more natural to require the existence of a strong solution to the SDE, see e.g.~\cite{Lwcw, LW18}.
\end{remark}

\section{Explicit coupling processes for SDEs with additive L\'{e}vy noise via coupling operators}\label{section2}
In this section, we will establish three kinds of coupling processes for the SDE \eqref{s1} by making full use of the coupling operator constructed in the previous section. In the literature, these three -- in general highly non-trivial -- couplings are treated in different settings; it is, therefore, surprising that we can handle them in a unified framework based on the coupling operator \eqref{cp1}.

\subsection{Coupling by reflection: rotationally symmetric L\'{e}vy noise}\label{subse1}
Assume that $Z=(Z_t)_{t\geq 0}$ is a pure jump rotationally  symmetric  L\'evy process with L\'evy measure by $\nu$.  For any $x,y,z\in\rd$, we write
\begin{equation}\label{e:effrr}
    R_{x,y}(z)
    :=
    \begin{cases}
        \displaystyle z-\frac{2\scalp{x-y}{z}}{|x-y|^2}(x-y),&\text{if\ \ } x\neq y,\\[\medskipamount]
        \displaystyle z,&\text{if\ \ } x=y
    \end{cases}
\end{equation}
for the reflection at the hyperplane orthogonal to $x-y$. Obviously, $R_{x,y}(z) = R_{y,x}(z)$, $R_{x,y}(z) = R_{x,y}^{-1}(z)$, $|R_{x,y}(z)|=|z|$ and $R_{x,y}(z)-(x-y) = R_{x,y}(z)+ R_{x,y}(x-y) = R_{x,y}(z+x-y)$.

Setting $n=1$, $\Psi_1(z)=R_{x,y}(z)$ and $\nu_1(dz)=\I_{\{|z|< \eta|x-y|\}}\,\nu(dz)$ for some fixed $\eta\in(0,\infty]$, \eqref{basic-coup-4} becomes
\begin{equation}\label{basic-coup-5}
  (x,y)\longmapsto
  \begin{cases}
    (x+z, y+R_{x,y}(z)), & \I_{\{|z|\leq \eta|x-y|\}}\,\nu(dz);\\
    (x+z, y+z), & \I_{\{|z|> \eta|x-y|\}}\,\nu(dz).
  \end{cases}
\end{equation}
Since $\nu$ is rotationally symmetric, $\nu_1$ is invariant under the transformation $R_{x,y}(z)\rightsquigarrow z$, as $R_{x,y}(z)=R_{x,y}^{-1}(z)$ and $|R_{x,y}(z)|=|z|$. This shows $\nu_1\circ\Psi_1 = \nu_1\circ\Psi^{-1}_1 = \nu_1$ which means that \eqref{e:cou} is satisfied. Thus, according to Proposition~\ref{P:co}, the jump system \eqref{basic-coup-5} determines a coupling operator $\widetilde L$.

Let us briefly verify the existence of a $2d$-dimensional coupling process which is generated by the coupling operator $\widetilde L$ given by \eqref{basic-coup-5}. By the L\'evy-It\^{o} decomposition, there exists a Poisson random measure $N(dt,dz)$ such that
\begin{gather*}
    dZ_t = \int_{\{|z|\geq 1\}}z\, N(dt,dz)+\int_{\{|z|<1\}} z\,\widetilde N(dt,dz),
\end{gather*}
where $\widetilde N(dt,dz)=N(dt,dz)-dt\,\nu(dz)$ is the compensated Poisson random measure. To keep notation simple, we set $\breve{N}(dt,dz)=\I_{(0,1)}(|z|)\, \widetilde N(dt,dz) + \I_{[1,\infty)}(|z|)\,N(dt,dz)$, and so
\begin{gather*}
    dZ_t=\int z\,\breve{N}(dt,dz).
\end{gather*}
Consider the following system of SDEs on $\real^{2d}$:
\begin{equation}\label{e:sde1}
\left\{\begin{aligned}
    dX_t &= b(X_t)\,dt+ \int z\,\breve{N}(dt,dz), &&t>0,\\
    dY_t &= b(Y_t)\,dt + \int_{\{|z|< \eta |X_t-Y_t|\}}R_{X_{t-},Y_{t-}}(z)\,\breve{N}(dt,dz)\\
            &\phantom{= b(Y_t)\,dt}\mbox{} + \int_{\{|z|\geq \eta |X_t-Y_t|\}}z\,\breve{N}(dt,dz), && t>0.
\end{aligned}\right.
\end{equation}
 For any $x,y,z\in \rd$ with $x\neq y$ we have
\begin{gather*}
    \left(\id_d-\frac{2}{|x-y|^2}{(x-y)(x-y)^\top}\right) z
    = R_{x,y}(z),
\end{gather*} where $\id_d$ denotes the $d\times d$ identity matrix.
Thus, for any fixed $z\in \rd$, $(x,y)\mapsto R_{x,y}(z)$ is locally Lipschitz continuous on $\{(x,y)\in \real^{2d}:x\neq y\}$. If we assume, in addition, that the drift term $b$ is Lipschitz continuous, then the SDE \eqref{e:sde1} has a unique strong solution $(X_t,Y_t)$ up to $t<\tau$, where $\tau$ is the coupling time defined by
\begin{equation}\label{e:refl4}
    \tau:=\inf\{t>0:X_t=Y_t\},
\end{equation}
see also the discussion in the proof of Proposition~\ref{P:strong} below.  Since, by assumption, the first SDE in \eqref{e:sde1} has a unique strong solution $(X_t)_{t\geq 0}$, it is natural to identify the solution of \eqref{e:sde1} with $(X_t,X_t)$ for all $t\geq \tau$. By It\^{o}'s formula, we can easily verify that the generator of $(X_t,Y_t)_{t\geq 0}$  is the coupling operator $\widetilde L$ given by \eqref{basic-coup-5}.

Recall that coupling by reflection for SDEs driven by an additive Brownian motion $B=(B_t)_{t\geq 0}$ is realized through the following system of SDEs, cf.~\cite{LR, CL}:
\begin{gather*}\Bigg\{\begin{aligned}
    dX_t &= b(X_t)\,dt+ dB_t,\\
    dY_t &= b(Y_t)\,dt + (\id_d-2e_te_t^\top)\,dB_t,
\end{aligned}  \end{gather*}
where
\begin{gather*}
    e_t:=|X_t-Y_t|^{-1}(X_t-Y_t),
\end{gather*}
$e_t^\top$ is the transpose of $e_t$, and $\tau$ is defined as \eqref{e:refl4}. In particular, we have $(X_t,Y_t) = (X_t,Y_t)\I_{\{t<\tau\}}+(X_t,X_t)\I_{\{t\geq \tau\}}$.  For any $t>0$ the matrix $A_t=\id_d-2e_te_t^\top$ is an orthogonal matrix and, by the L\'evy characterization of Brownian motion, the process ${B}^{\#}$ defined by ${B}^{\#}_t := A_tB_t$, $t>0$, is also a Brownian motion. We can use a similar idea to construct the corresponding coupling for the SDE \eqref{s1}: If $Z=(Z_t)_{t\geq 0}$ is a rotationally symmetric pure jump L\'evy process, then the process $Z^{\#}$ defined by $Z^{\#}_t:=A_tZ_t$, $t>0$, is again a rotationally symmetric pure jump L\'evy process which has the same distribution as $(Z_t)_{t\geq 0}$.
Indeed, let $L^{\#}$ be the generator of the process $Z^{\#}$. For any $f\in C_b^2(\rd)$, we know
\begin{align*}
    L^{\#}f(x)
    &= \int\left(f(x+R_{x,y}(z))-f(x)-\scalp{\nabla f(x)}{R_{x,y}(z)} \I_{(0,1)}(|R_{x,y}(z)|)\right)\nu(dz)\\
    &= \int\left(f(x+z)-f(x)-\scalp{\nabla f(x)}{z} \I_{(0,1)}(|z|)\right)\nu(dz)
    = Lf(x),
\end{align*}
where we use the fact that $\nu$ is invariant under the change of variables $R_{x,y}(z)\rightsquigarrow z$ due to the rotational symmetry of the process $Z$. This shows that $Z^{\#}$ is a pure-jump L\'evy process with the same L\'evy measure as $Z$, hence $Z$ and $Z^{\#}$ coincide in law. In particular, the associated coupling process can be realized using   the SDE \eqref{e:sde1} with $\eta=\infty$. This is the reason why we call \eqref{basic-coup-5} \emph{coupling by reflection}.

This construction is, in general, not always the best choice. In contrast to the diffusion case, the above construction allows for a situation that two jump processes -- even if they are already close -- suddenly jump far apart.  In order to apply coupling by reflection we have to choose the parameter $\eta$ carefully. We also emphasize that the above argument still works if $Z$ is of the form $Z = Z'+Z''$ where $Z'$, $Z''$ are independent L\'evy processes and $Z''$ is rotationally symmetric, see e.g.~\cite{Wang}.

\subsection{Refined basic coupling: general L\'evy noise}\label{subsection2}
The notion of \emph{basic coupling} was introduced by M.-F.\ Chen, \cite[Example 2.10]{Chen} when studying Markov $q$-processes. The underlying idea of this coupling is to force the two marginal processes to jump to the same point with the biggest possible rate.  In the L\'evy case, the biggest jump rate takes the form $\mu_{y-x}(dz):=[\nu\wedge (\delta_{y-x} \ast\nu)](dz)$,  where $\nu$ is the L\'evy measure and  $x\neq y$ are the positions of the two marginal processes immediately before the jump.

Let $\nu$ be the L\'evy measure of the L\'evy process $Z=(Z_t)_{t\geq 0}$. We stress that the construction of coupling in this section does not require any further (geometric) assumptions on the L\'evy measure. For any $\kappa>0$ and $x,y\in\rd$, we define
\begin{gather*}
    (x-y)_{\kappa} := \left(1\wedge \frac{\kappa}{|x-y|}\right)(x-y),\quad \big(\tfrac 1\infty:=0\big).
\end{gather*}
The following \emph{refined basic coupling} was introduced in \cite[Section 2]{Lwcw} for the first time:
\begin{equation}\label{bas-1}
    (x,y)\longmapsto
    \begin{cases}
        (x+z, y+z+(x-y)_\kappa), & \frac12 \mu_{(y-x)_\kappa}(dz);\\
        (x+z, y+z+(y-x)_\kappa), & \frac12 \mu_{(x-y)_\kappa}(dz);\\
        (x+z, y+z), & \big(\nu - \frac12 \mu_{(y-x)_\kappa}  -\frac12 \mu_{(x-y)_\kappa}\big)(dz).
    \end{cases}
\end{equation}
Obviously, \eqref{bas-1} is the same as \eqref{basic-coup-4} if $n=2$, $\Psi_1(z)=z+(x-y)_\kappa$, $\Psi_2(z)=z+(y-x)_\kappa$ and $\nu_1=\nu_2=\frac{1}{2}\nu$. Since $\Psi_1^{-1}(z)=\Psi_2(z)$, \eqref{e:cou} holds true, and so \eqref{bas-1} yields a coupling operator.

Let us briefly discuss some properties of the  refined basic coupling \eqref{bas-1}.

If $|x-y|\leq \kappa$, then the refined basic coupling becomes
    \begin{equation}\label{bas-2}
        (x,y)\longmapsto
        \begin{cases}
        (x+z, y+z+(x-y)), & \frac12 \mu_{y-x}(dz);\\
        (x+z, y+z+(y-x)), & \frac12 \mu_{x-y}(dz);\\
        (x+z, y+z), & \big(\nu - \frac12 \mu_{y-x}  -\frac12 \mu_{x-y}\big)(dz).
        \end{cases}
    \end{equation}
    In the first row of \eqref{bas-2}, the distance of the two marginals decreases from $|x-y|$ to $|(x+z)- (y+z+(x-y))|=0$, and this reflects the idea of the basic coupling --   but only with half of the maximum common jump intensity from $x$ to $x+z$ and $y$ to $y+z+(x-y)$.   In the second row of \eqref{bas-2} the distance is doubled after jumping, with the remaining half of the maximum common jump intensity,   while we couple the remaining mass synchronously as indicated in the third row of \eqref{bas-2}.

If $|x-y|>\kappa$,
    then the first row of \eqref{bas-1} shows that the distance after the jump is $|x-y|-\kappa$. Therefore, the parameter $\kappa$ is the threshold to determine whether the marginal processes jump to the same point, or become slightly closer to each other. This is a technical point, but is crucial for our argument to make the coupling \eqref{bas-1} efficient for L\'evy jump processes with bounded  (finite-range) jumps.

Using the technique from \cite[Section 2.3]{Lwcw} we can construct the coupling process associated with the refined basic coupling. In a first step, we extend the Poisson random measure $N$ from  $\real_+\times \rd$ to $\real_+\times \rd\times [0,1]$  by adjoining an independent uniformly distributed random component
\begin{align*}
    N(ds,dz,du) &= \sum_{0<{r}\leq s : \Delta Z_{r}\neq 0}\delta_{(r, \Delta Z_{r})}(ds,dz)\,\I_{[0,1]}(u)\,du,\\
    \widetilde N(ds,dz,du) &= N(ds,dz,du)- ds\, \nu(dz)\,du,
\intertext{and we set}
    \breve{N}(ds,dz,du) & =\I_{[1,\infty)\times[0,1]}(|z|,u)\,N(ds,dz,du)+\I_{(0,1)\times [0,1]}(|z|,u)\,\widetilde{N}(ds,dz,du),\\
    Z_t &= \int_0^t\int_{\rd\times [0,1]} z\,\breve{N}(ds,dz,du).
\end{align*}
We are going to use the adjoined random variable to define a random threshold which determines whether the processes $X$ and $Y$ move towards each other or extend their distance. For this we need the following control function $\rho$:
\begin{gather*}
    \rho(x,z)
    = \frac{\nu \wedge (\delta_x\ast \nu) (dz)}{\nu(dz)}\in [0,1],\quad x,z\in\rd.
\end{gather*}
Recall that $(x)_\kappa=(1\wedge (\kappa |x|^{-1}))x$ for any $x\neq0$. Set $U_t=X_t-Y_t$,
\begin{gather*}
    V_{t}(z,u)
    = (U_{t})_{\kappa} \left( \I_{\left[0,  \frac 12\rho((-U_{t})_{\kappa},z)\right]}(u)
    - \I_{\left(\frac 12\rho((-U_{t})_{\kappa},z),\,\frac 12 [\rho((-U_{t})_{\kappa},z)+\rho((U_{t})_{\kappa},z)]\right]}(u)\right)
\end{gather*}
and
\begin{gather*}
    dL^{\#}_t = \int_{\rd\times [0,1]}V_{t-}(z,u) \,N(dt,dz,du).
\end{gather*}
Consider for any $x,y\in \rd$ with $x\neq y$ the following system of SDEs:
\begin{equation}\label{SDE-coup-eq-1}
    \left\{\begin{aligned}
        dX_t &= b(X_t)\, dt + dZ_t,&& X_0=x;\\
        dY_t &= b(Y_t)\,dt+ dZ_t + dL^{\#}_t, && Y_0=y.
  \end{aligned}\right.
\end{equation}
It is shown in \cite[Propositions 2.2 and 2.3]{Lwcw} that the system \eqref{SDE-coup-eq-1} has a unique strong solution which is a non-explosive coupling process $(X_t,Y_t)_{t\geq 0}$ of the SDE \eqref{s1}. Moreover, the generator of $(X_t,Y_t)_{t\geq 0}$ is the refined basic coupling operator constructed above, and $X_t=Y_t$ for any $t\geq \tau$, where $\tau=\inf\{t\geq 0: X_t=Y_t\}$ is the coupling time of the process $(X_t,Y_t)_{t\geq 0}$. Note that for $x\neq0$,
\begin{equation}\label{e:finitemeasure}
    \mu_x(\rd)
    \leq \int_{\{|z|\le|x|/2\}}\,\delta_x*\nu(dz)+\int_{\{|z|>|x|/2\}}\,\nu(dz)
    \leq 2 \int_{\{|z|\ge|x|/2\}}\,\nu(dz)<\infty,
\end{equation}
i.e.~$\mu_x$ is a finite measure on $(\rd, \Bscr(\rd))$ for any $x\neq0$.

\subsection{Coupling vs.\ optimal transport: rotationally symmetric L\'{e}vy noise}\label{section2-3}
In this section we discuss coupling from the point of view of optimal transport, see \cite{Vi} as a standard reference; our exposition is inspired by \cite[Section 2.1]{M15} and also by  McCann's solution to the optimal transport problem for concave costs on $\real$, cf.~\cite{Mc}. In contrast to \cite{M15} we will make full use of our approach through the coupling operator.

Let us return to the framework of Section~\ref{subse1}. We will assume that the pure jump L\'evy process $Z$ in the SDE \eqref{s1} is rotationally symmetric and that its L\'evy measure is of the form $\nu(dz):=q(|z|)\,dz$ for some nonnegative measurable function $q(r)$. Let $q_0(r)\leq q(r)$, i.e.\ $q_0(|z|)\,dz$ is also a rotationally symmetric L\'evy measure.

For any $x,y,z\in \rd$, let $R_{x,y}(z)$ be the reflection defined in \eqref{e:effrr}.  We consider the following jump system on $\real^{2d}$:
\begin{equation}\label{basic-coup-1}
(x,y)\longmapsto
    \begin{cases}
    (x+z, y+z+(x-y)), & q_0(|z|)\wedge q_0(|x-y+z|)\,dz;\\
    (x+z,y+R_{x,y}(z)), & \left[q_0(|z|)-q_0(|z|)\wedge q_0(|x-y+z|)\right]dz;\\
    (x+z,y+z),&\left[q(|z|)-q_0(|z|)\right]dz.
    \end{cases}
\end{equation}
If we choose in \eqref{basic-coup-4} $n=2$, $\Psi_1(z)=z+(x-y)$, $\Psi_2(z)=R_{x,y}(z)$,
$\nu_1(dz)=q_0(|z|)\wedge q_0(|x-y+z|)\,dz$ and
$\nu_2(dz)=\left[q_0(|z|)-q_0(|z|)\wedge q_0(|x-y+z|)\right]dz$, then \eqref{basic-coup-1} can be derived from \eqref{basic-coup-4}.
Observing that $R_{x,y}(x-y)=y-x$ ($x\neq y$) and $R_{x,y}(z_1+z_2)=R_{x,y}(z_1)+R_{x,y}(z_2)$ for any $z_1,z_2\in \rd$ we see
\begin{align*}
    &q_0(|\Psi_1^{-1}(z)|) \wedge q_0(|x-y+\Psi_1^{-1}(z)|)
    = q_0(|y-x+z|)\wedge q_0(|z|)
\intertext{and}
    &q_0(|R^{-1}_{x,y}(z)|)-q_0(|R^{-1}_{x,y}(z)|)\wedge q_0(|x-y+R^{-1}_{x,y}(z)|)\\
    &=q_0(|z|)-q_0(|z|)\wedge q_0(|R^{-1}_{x,y}(y-x+z)|)
    =q_0(|z|)-q_0(|z|)\wedge q_0(|y-x+z|).
\end{align*}
This shows that \eqref{e:cou} is satisfied.

We are now going to construct the coupling process for the coupling \eqref{basic-coup-1}. We continue to use the notation introduced in Section~\ref{subsection2}. Denote by $\bar N(dt,dz,du)$ and $\breve{N}_0(dt,dz,du)$ the extended Poisson random measure on $\real_+\times \rd\times [0,1]$ whose compensators are given by  $dt\,q(|z|)\,dz\,du$ and $dt\,q_0(|z|)\,dz\,du$, respectively. In order to keep notation simple, we set $r(z,X_{t-}-Y_{t-}) := (q_0(|z|)\wedge q_0(X_{t-}-Y_{t-}+z))/q_0(|z|)$.   Consider the following SDE:
\begin{equation}\label{e:sderef1}\left\{\begin{aligned}
    dX_t &= b(X_t)\,dt+\int_{\rd\times[0,1]} z\,\breve{N}(dt,dz,du),\\
    dY_t &= b(Y_t)\,dt+\int_{\rd\times[0,1]} z\,(\breve{N}(dt,dz,du)-\breve{N}_0(dt,dz,du))\\
    &\quad\mbox{}+\int_{\rd\times[0,1]}R_{X_{t-},Y_{t-}}(z)\I_{(r(z,X_{t-}-Y_{t-}),1]}(u)\,\breve{N}_0(dt,dz,du)\\
    &\quad\mbox{}+\int_{\rd\times[0,1]} (X_{t-}-Y_{t-}+z) \I_{(r(z,X_{t-}-Y_{t-}),1]}(u)\,\breve{N}_0(dt,dz,du)\\
    &\quad\mbox{}-\int_{\rd\times[0,1]}(X_{t-}-Y_{t-}+z) \left[\I_{(0,1)}(|z+(X_{t-}-Y_{t-})|)- \I_{(0,1)}(|z|)\right]\\
    &\qquad\mbox{}\times\I_{(r(z,X_{t-}-Y_{t-}),1]}(u)\,dt\,q_0(|z|)\,dz\,du.
\end{aligned}\right.\end{equation}

\begin{proposition}\label{P:strong}
    If $b$ is Lipschitz continuous on $\rd$, then the SDE \eqref{e:sderef1} has a unique strong solution up to the coupling time $\tau$. The generator of this solution is determined by the coupling \eqref{basic-coup-1}.
\end{proposition}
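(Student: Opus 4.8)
The plan is to solve \eqref{e:sderef1} one line at a time and then read off the generator. The first equation is exactly \eqref{s1}, which by hypothesis has the unique strong solution $(X_t)_{t\ge 0}$; what remains is to solve the second equation for $Y$ along the fixed path of $X$ and the fixed driving measures $\breve N,\breve N_0$, and then to check by It\^o's formula that the generator of $(X_t,Y_t)$ on $[0,\tau)$ is the operator determined by the jump system \eqref{basic-coup-1}. Because the $Y$-coefficients behave well only off the diagonal $\{x=y\}$, the solution is built on $[0,\tau)$; the whole argument is an adaptation of \cite[Section~2.3]{Lwcw}.

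First I would interlace the large jumps. Restricting every Poisson measure to $\{|z|\ge 1\}$ leaves a point process of finite total intensity, hence locally finitely many jump times $0<t_1<t_2<\cdots$; on each $[t_{k-1},t_k)$ the $Y$-equation is driven only by the compensated small-jump parts of $\breve N,\breve N_0$, and at $t_k$ one applies the deterministic update prescribed by \eqref{e:sderef1} for the mark $(z,u)$ of that atom -- a bounded map of $(X_{t_k-},Y_{t_k-})$, well defined as long as $X_{t_k-}\ne Y_{t_k-}$. So it suffices to treat the small-jump equation. For that I would localise strictly off the diagonal by $\sigma_n:=\inf\{t>0:|X_t-Y_t|\le 1/n\ \text{or}\ |Y_t|\ge n\}$, so that $\sigma_n\uparrow\tau$ once non-explosion is known; on $[0,\sigma_n)$ the coefficients are bounded and satisfy the Lipschitz, resp.\ $L^2$-Lipschitz, bounds required for a Picard iteration and pathwise uniqueness. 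Indeed, $b$ is globally Lipschitz by assumption; for fixed $z$ the reflection $(x,y)\mapsto R_{x,y}(z)$ is locally Lipschitz on $\{x\ne y\}$ with constant $\lesssim|z|/|x-y|$, as noted after \eqref{e:sde1}, hence $\lesssim n|z|$ on $\{|x-y|\ge 1/n\}$, which is square-integrable against $q_0(|z|)\,dz$ over $\{|z|<1\}$; the translation row reduces to the affine map $y\mapsto x+z$; and the compensator-correction drift in \eqref{e:sderef1} is bounded and is merely the price of truncating $Y$'s small jumps by $\I_{(0,1)}(|z|)$ instead of $\I_{(0,1)}(|z+x-y|)$, so it adds no difficulty. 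A contraction argument on $[0,\sigma_n\wedge T]$, then $n\to\infty$ and $T\to\infty$, gives the unique strong solution on $[0,\tau)$; non-explosion before $\tau$ follows from a Gronwall estimate for $\Ee\bigl[\sup_{s\le t\wedge\sigma_n}|Y_s|^2\bigr]$ using the linear growth of $b$ and of $z\mapsto|x-y+z|$.

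The main obstacle is the threshold factor $\I_{(r(z,X_{t-}-Y_{t-}),1]}(u)$, which decides whether a $\breve N_0$-atom makes $Y$ translate (row~1 of \eqref{basic-coup-1}) or reflect (row~2): the map $w\mapsto r(z,w)$ inherits only the measurability of $q_0$, and the indicator jumps across the level $u=r$, so this coefficient is not Lipschitz in the state. I would control it by the elementary identity $\int_0^1\bigl|\I_{(a,1]}(u)-\I_{(b,1]}(u)\bigr|^2\,du=|a-b|$ for $a,b\in[0,1]$, together with $|r(z,w_1)-r(z,w_2)|\le q_0(|z|)^{-1}\bigl|q_0(|z|)\wedge q_0(|w_1+z|)-q_0(|z|)\wedge q_0(|w_2+z|)\bigr|$; after multiplying by the prefactor $|X_{t-}-Y_{t-}+z|$ and integrating against $q_0(|z|)\,dz\,du$ over $\{|z|<1\}$, a finiteness estimate in the spirit of \eqref{e:finitemeasure} makes this a finite measure on $\{|w|\ge 1/n\}$ -- exactly what closes the $L^2$-continuity estimate for the jump coefficient. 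If $q_0$ is only measurable, one adds the routine remark that, on any compact time interval, the $\breve N_0$-atoms whose $u$-coordinate meets a discontinuity level of $r$ form a null event, so pathwise uniqueness is unaffected.

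Finally, to identify the generator I would apply It\^o's formula to $f(X_t,Y_t)$, $f\in C_b^2(\real^{2d})$, on $[0,\sigma_n)$: the $b$-terms yield $\scalp{\nabla_xf}{b(X_t)}+\scalp{\nabla_yf}{b(Y_t)}$, and the three kinds of atoms -- $\breve N_0$-atoms with $u\le r$, $\breve N_0$-atoms with $u>r$, and $(\breve N-\breve N_0)$-atoms -- produce, after compensating the small jumps, precisely the three jump integrals of \eqref{basic-coup-1}, the correction drift in \eqref{e:sderef1} absorbing the mismatch between the cutoff $\I_{(0,1)}(|z|)$ and the cutoff $\I_{(0,1)}(|z+x-y|)$ adapted to the translation row. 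Hence $f(X_t,Y_t)-f(X_0,Y_0)-\int_0^t\widetilde Lf(X_s,Y_s)\,ds$ is a local martingale on $[0,\tau)$ with $\widetilde L$ the operator attached to \eqref{basic-coup-1}, which is a coupling operator with marginal $L$ by the verification of \eqref{e:cou} in Section~\ref{section2-3} (Proposition~\ref{P:co}). The delicate step throughout is the threshold term; the rest is a lengthy but routine adaptation of \cite[Propositions~2.2 and 2.3]{Lwcw}.
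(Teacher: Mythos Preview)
Your overall architecture --- solve \eqref{s1} for $X$ first, localise off the diagonal via stopping times $\sigma_n\uparrow\tau$, interlace the large jumps, and read off the generator by It\^o's formula --- is the same as the paper's. The point of divergence, and the one step that does not close as you have written it, is the treatment of the threshold factor $\I_{(r(z,X_{t-}-Y_{t-}),1]}(u)$.

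You attempt to push this term through the Picard\,/\,$L^2$-Lipschitz machinery via $\int_0^1|\I_{(a,1]}-\I_{(b,1]}|^2\,du=|a-b|$ together with a bound on $|r(z,w_1)-r(z,w_2)|$. But $r(z,w)=q_0(|z|)^{-1}\bigl(q_0(|z|)\wedge q_0(|w+z|)\bigr)$ inherits only the \emph{measurability} of $q_0$; for general $q_0$ there is no modulus of continuity in $w$ at all, hence no $L^2$-Lipschitz bound. Your appeal to ``a finiteness estimate in the spirit of \eqref{e:finitemeasure}'' shows that a certain integral is finite, not that it is $O(|w_1-w_2|^2)$ --- finiteness does not close a contraction argument. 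Nor does the remark about atoms avoiding discontinuity levels of $r$: a Picard iteration needs a quantitative contraction, not an almost-sure statement.

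The paper avoids the issue entirely by a different decomposition. The point is that, for $x\neq y$, the measure $q_0(|z|)\wedge q_0(|z+x-y|)\,dz$ is \emph{finite} (this is exactly \eqref{e:finitemeasure}). One rewrites the $Y$-equation so that every integral carrying the threshold indicator is taken against the \emph{uncompensated} Poisson measure $N_0$; the resulting extra compensator-drift terms are then shown, by a short calculation using $R_{x,y}(x-y)=y-x$ and a change of variable $z\rightsquigarrow R_{x,y}(z)$, to cancel exactly with the correction drift already present in \eqref{e:sderef1}. What remains is
\[
dY_t=b(Y_t)\,dt+\!\int\! z\,(\breve N-\breve N_0)+\!\int\! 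R_{X_{t-},Y_{t-}}(z)\,\breve N_0
+\!\int\!\bigl[(X_{t-}-Y_{t-}+z)-R_{X_{t-},Y_{t-}}(z)\bigr]\I\;N_0,
\]
with $\I$ the threshold indicator. The first three integrals have locally Lipschitz coefficients on $\{x\neq y\}$ (this is where your observation about $(x,y)\mapsto R_{x,y}(z)$ is used) and are handled by the standard strong-solution theory; the last integral is against a Poisson random measure of \emph{finite} intensity on $\{|x-y|\geq 1/n\}$, so the interlacing technique of \cite[Chapter~IV.9]{ike-wat} applies with no regularity whatsoever required of the indicator. In short: the finiteness estimate you invoke is the right ingredient, but it should be used to \emph{interlace} the threshold-dependent jumps, not to manufacture an $L^2$-Lipschitz bound that does not exist.
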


\begin{allowdisplaybreaks}
\begin{proof}
As in \eqref{e:finitemeasure}, we see that $\int q_0(|z|)\wedge q_0(|z+x-y|)\,dz<\infty$ for $x\neq y$. We can rewrite the SDE for $Y_t$ in the following way
\begin{align*}
    dY_t
    &= b(Y_t)\,dt + \int_{\rd\times[0,1]} z\,(\breve{N}(dt,dz,du)-\breve{N}_0(dt,dz,du))\\
    &\quad\mbox{} + \int_{\rd\times[0,1]} R_{X_{t-},Y_{t-}}(z)\,\breve{N}_0(dt,dz,du)\\
    &\quad\mbox{}-\int_{\rd\times[0,1]} R_{X_{t-},Y_{t-}}(z)\I_{(r(z,X_{t-}-Y_{t-}),1]}(u)\,\breve{N}_0(dt,dz,du)\\
    &\quad\mbox{} +\int_{\rd\times[0,1]} (X_{t-}-Y_{t-}+z)\I_{(r(z,X_{t-}-Y_{t-}),1]}(u)\,\breve{N}_0(dt,dz,du)\\
    &\quad\mbox{} -\int_{\rd\times[0,1]}(X_{t-}-Y_{t-}+z)\left[\I_{(0,1)}(|z+(X_{t-}-Y_{t-})|) - \I_{(0,1)}(|z|)\right]\\
    &\qquad\mbox{}\times\I_{(r(z,X_{t-}-Y_{t-}),1]}(u)\,dt\,q_0(|z|)\,dz\,du\\
    &= b(Y_t)\,dt + \int_{\rd\times[0,1]} z\,(\breve{N}(dt,dz,du)-\breve{N}_0(dt,dz,du))\\
    &\quad\mbox{} + \int_{\rd\times[0,1]} R_{X_{t-},Y_{t-}}(z)\,\breve{N}_0(dt,dz,du)\\
    &\quad\mbox{} - \int_{\rd\times[0,1]} R_{X_{t-},Y_{t-}}(z)\I_{(r(z,X_{t-}-Y_{t-}),1]}(u)\,{N}_0(dt,dz,du)\\
    &\quad\mbox{} + \int_{\rd\times[0,1]} (X_{t-}-Y_{t-}+z)\I_{(r(z,X_{t-}-Y_{t-}),1]}(u)\,{N}_0(dt,dz,du)\\
    &\quad\mbox{} + \int_{\{|z|\leq 1\}\times[0,1]} R_{X_{t-},Y_{t-}}(z)\I_{(r(z,X_{t-}-Y_{t-}),1]}(u)\,dt\,q_0(|z|)\,dz\,du\\
    &\quad\mbox{} - \int_{\{|z|\leq 1\}\times[0,1]} (X_{t-}-Y_{t-}+z)\I_{(r(z,X_{t-}-Y_{t-}),1]}(u)\,dt\,q_0(|z|)\,dz\,du\\
    &\quad\mbox{} - \int_{\rd\times[0,1]}(X_{t-}-Y_{t-}+z) \left[\I_{(0,1)}(|z+(X_{t-}-Y_{t-})|) - \I_{(0,1)}(|z|)\right]\\
    &\qquad\mbox{} \times \I_{(r(z,X_{t-}-Y_{t-}),1]}(u)\,dt\,q_0(|z|)\,dz\,du.
\end{align*}

We will now rearrange the last three terms involving  $dt\,q_0(|z|)\,dz\,du$:
\begin{align*}
    \int_{\{|z|< 1\}\times[0,1]} &R_{X_{t-},Y_{t-}}(z) \I_{(r(z,X_{t-}-Y_{t-}),1]}(u)\,dt\,q_0(|z|)\,dz\,du\\
    &= \int_{\{|z|< 1\}\times[0,1]}z\I_{[0,r(z,Y_{t-}-X_{t-})]}(u) \,dt\, q_0(|z|)\,dz\,du,
\end{align*}
which follows from $R_{x,y}(x-y)=y-x$ ($x\neq y$) and $R_{x,y}(z_1+z_2)=R_{x,y}(z_1)+R_{x,y}(z_2)$ for $z_1,z_2\in \rd$.
On the other hand,\begin{align*}
    &\int_{\rd\times[0,1]}(X_{t-}-Y_{t-}+z)\left[\I_{(0,1)}(|z+(X_{t-}-Y_{t-})|)- \I_{(0,1)}(|z|)\right]\\
    &\qquad\mbox{}\times\I_{(r(z,X_{t-}-Y_{t-}),1]}(u)\,dt\,q_0(|z|)\,dz\,du\\
    &=\int_{\{|z+(X_{t-}-Y_{t-})| < 1\}\times[0,1]}(X_{t-}-Y_{t-}+z)\I_{(r(z,X_{t-}-Y_{t-}),1]}(u)\,dt\,q_0(|z|)\,dz\,du\\
    &\quad\mbox{} - \int_{\{|z| < 1\}\times[0,1]} (X_{t-}-Y_{t-}+z)  \I_{(r(z,X_{t-}-Y_{t-}),1]}(u)\,dt\,q_0(|z|)\,dz\,du\\
    &=\int_{\{|z|< 1\}\times[0,1]}z \I_{[0,r(z,Y_{t-}-X_{t-})]}(u)\,dt\,q_0(|z|)\,dz\,du\\
    &\quad\mbox{} - \int_{\{|z|< 1\}\times[0,1]} (X_{t-}-Y_{t-}+z) \I_{(r(z,X_{t-}-Y_{t-}),1]}(u)\,dt\,q_0(|z|)\,dz\,du.
\end{align*}
 This means that the equation for $Y_t$ becomes simpler:
\begin{align*}
    dY_t
    &= b(Y_t)\,dt + \int_{\rd\times[0,1]} z\,(\breve{N}(dt,dz,du)-\breve{N}_0(dt,dz,du))\\
    &\quad\mbox{} +\int_{\rd\times[0,1]} R_{X_{t-},Y_{t-}}(z)\,\breve{N}_0(dt,dz,du)\\
    &\quad\mbox{} -\int_{\rd\times[0,1]} R_{X_{t-},Y_{t-}}(z)\I_{(r(z,X_{t-}-Y_{t-}),1]}(u)\,{N}_0(dt,dz,du)\\
    &\quad\mbox{} +\int_{\rd\times[0,1]} (X_{t-}-Y_{t-}+z)\I_{(r(z,X_{t-}-Y_{t-}),1]}(u)\,{N}_0(dt,dz,du).
\end{align*}

For fixed $z\in \rd$, the function $(x,y)\mapsto R_{x,y}(z)$ is locally Lipschitz continuous on $\{(x,y)\in \real^{2d}:x\neq y\}$. The remaining two terms driven by
\begin{gather*}
    \I_{(r(z,X_{t-}-Y_{t-}),1]}(u)\,{N}_0(dt,dz,du)
\end{gather*}
may be regarded as stochastic integrals with respect to a finite Poisson measure; this is again due to the fact that $\int q_0(|z|)\wedge q_0(z+x-y)\,dz<\infty$ for any $x\neq y$. Using the standard interlacing technique, we see that the SDE \eqref{e:sderef1} has a unique strong solution up to the coupling time $\tau$, see \cite[Chapter~IV.9]{ike-wat}.
\end{proof}
\end{allowdisplaybreaks}

\section{Regularity of SDEs with additive L\'evy noise revisited}\label{section3}
We will now apply our coupling technique, to study regularity properties of the semigroup associated with the SDE \eqref{s1}. Let $(X_t)_{t\geq 0}$ be the unique strong solution to \eqref{s1} and denote by $(P_t)_{t\geq 0}$ its transition semigroup. We define
\begin{gather*}
    B(r) := \frac 1r \sup_{|x-y|=r} \scalp{b(x)-b(y)}{x-y}, \quad r>0.
\end{gather*}

\begin{theorem}\label{T1}
Let $Z$ be a pure jump L\'evy process with L\'evy measure $\nu$.
\begin{enumerate}[label=\textup{\alph*)}]
\item\label{T1-a}
    Let $Z$ be rotationally symmetric. Define
    \begin{gather*}
        \psi(r) = \int_{\{|z|\leq r\}}|z|^2\,\nu(dz)
        \et
        \Phi(r) = \int_0^{r}\int_u^1\frac{1}{\psi(s/4)}\,ds\,du,
        \quad r>0.
    \end{gather*}
    If
    \begin{equation}\label{e:tss}
        \textup{a)}\;\;\int_0^1\frac{s}{\psi(s)}\,ds<\infty
        \et
        \textup{b)}\;\;\limsup_{r\to 0}\left( B(r)\int_r^1\frac{1}{\psi(s/4)}\,ds\right)< \frac 2d,
    \end{equation}
    then there exists a constant $c>0$ such that for any $f\in B_b(\rd)$, $x\in \rd$ and $t>0$,
    \begin{gather*}
        \limsup_{y\to x} \frac{|P_tf(x)-P_tf(y)|}{\Phi(|x-y|)}\leq c\left(1\wedge \frac{1}{t} \right).
    \end{gather*}

\item\label{T1-b}
    For an arbitrary pure jump L\'evy process $Z$, define
    \begin{gather}\label{e:tss2}
        \psi(r)=r^2\inf_{|x|\leq r}(\nu \wedge (\delta_x*\nu))(\rd)
        \et
        \Phi(r)=\int_0^{r}\int_u^1\frac{1}{\psi(s/2)}\,ds\,du,
        \quad r>0.
    \end{gather}
    If
    \begin{equation}
        \textup{a)}\;\;\int_0^1\frac{s}{\psi(s)}\,ds<\infty
        \et
        \textup{b)}\;\;\limsup_{r\to 0}\left( B(r)\int_r^1\frac{1}{\psi(s/2)}\,ds\right)< \frac 12,
    \end{equation}
    then there exists a constant $c>0$ such that for any $f\in B_b(\rd)$, $x\in \rd$ and $t>0$,
    \begin{gather*}
        \limsup_{y\to x} \frac{|P_tf(x)-P_tf(y)|}{\Phi(|x-y|)}\leq c\left(1\wedge \frac{1}{t} \right).
    \end{gather*}
\end{enumerate}
\end{theorem}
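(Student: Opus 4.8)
The plan is to prove both parts with the coupling method of Sections~\ref{section1}--\ref{section2}. Realise the two marginals of \eqref{s1} started from $x$ and $y$ as a single process $(X_t,Y_t)_{t\ge0}$ generated by one of the coupling operators built there --- coupling by reflection \eqref{basic-coup-5} (with a parameter $\eta$ to be chosen) for part~\ref{T1-a}, the refined basic coupling \eqref{bas-1} (with a parameter $\kappa$ to be chosen) for part~\ref{T1-b} --- and control the tail of the coupling time $\tau=\inf\{t>0:X_t=Y_t\}$. Since $X_t=Y_t$ for $t\ge\tau$, for every $f\in B_b(\rd)$ we have $|P_tf(x)-P_tf(y)|=|\Ee f(X_t)-\Ee f(Y_t)|\le 2\|f\|_\infty\,\Pp(\tau>t)$, so it suffices to establish $\Pp(\tau>t)\le c\,(1\wedge t^{-1})\,\Phi(|x-y|)$ whenever $|x-y|$ is small.

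The core of the argument is a Lyapunov-type estimate for $U_t:=X_t-Y_t$ with the test function $g(u):=\Phi(|u|)$. By construction $\Phi\in C^2((0,\infty))$, $\Phi(0)=0$, $\Phi'(r)=\int_r^1\psi(s/c)^{-1}\,ds>0$ is decreasing and $\Phi''(r)=-\psi(r/c)^{-1}<0$, where $c=4$ in part~\ref{T1-a} and $c=2$ in part~\ref{T1-b}; hypothesis a) is precisely what makes $\Phi$ finite. Applying the coupling generator $\widetilde L$ from \eqref{cp1} to $g$, the drift part equals $\tfrac{\Phi'(|u|)}{|u|}\scalp{u}{b(x)-b(y)}\le\Phi'(|u|)\,B(|u|)$ by the definition of $B$. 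For the jump part in part~\ref{T1-a} one uses that, under coupling by reflection, the jumps of $U$ run along the ray through $U$, namely $u\mapsto u+2\scalp{e}{z}e$ with $e=u/|u|$; rotational symmetry of $\nu$ allows the symmetrisation $z\rightsquigarrow z-2\scalp{e}{z}e$, which annihilates the first-order correction term and turns the jump part into $\tfrac12\int_{\{|z|\le\eta|u|\}}\bigl[\Phi(|u|+2\scalp{e}{z})+\Phi(|u|-2\scalp{e}{z})-2\Phi(|u|)\bigr]\,\nu(dz)$; with $\eta=\tfrac12$ all arguments of $\Phi$ lie in $(0,2|u|)$, so concavity bounds the bracket by $-4\scalp{e}{z}^2/\psi(|u|/2)$, and inserting the identity $\int_{\{|z|\le|u|/2\}}\scalp{e}{z}^2\,\nu(dz)=d^{-1}\psi(|u|/2)$ yields a jump part $\le -2/d$. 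In part~\ref{T1-b}, once $|u|\le\kappa$ the two refined channels of \eqref{bas-1} send $U$ to $0$ and to $2U$; after the measure-preserving substitution identifying $\mu_{(x-y)_\kappa}$ with $\mu_{(y-x)_\kappa}$ (this is where $\Psi_1^{-1}=\Psi_2$ enters), the two gradient-correction integrals cancel exactly, so the jump part collapses to $\tfrac12\mu_{(y-x)_\kappa}(\rd)\bigl(\Phi(2|u|)-2\Phi(|u|)\bigr)$, which concavity bounds by $-\tfrac12\mu_{(y-x)_\kappa}(\rd)\,|u|^2/\psi(|u|)\le-\tfrac12$ because $\mu_{(y-x)_\kappa}(\rd)\ge\psi(|u|)/|u|^2$. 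In either case $\widetilde Lg(u)\le\Phi'(|u|)B(|u|)-2/d$ (resp.\ $-1/2$), and since hypothesis b) reads exactly $\limsup_{r\to0}B(r)\Phi'(r)<2/d$ (resp.\ $<1/2$), there are $\delta>0$ (with $\delta\le\kappa$ in part~\ref{T1-b}) and $\varepsilon_0>0$ such that $\widetilde Lg(u)\le-\varepsilon_0$ for all $0<|u|\le\delta$.

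It remains to pass from this dissipativity to the tail bound for $\tau$. Let $\sigma_\delta=\inf\{t>0:|U_t|>\delta\}$. By It\^o's formula and the estimate above, $g(U_{t\wedge\tau\wedge\sigma_\delta})+\varepsilon_0\,(t\wedge\tau\wedge\sigma_\delta)$ is a supermartingale; since $g\ge0$ and $g(U_0)=\Phi(|x-y|)$ this gives $\varepsilon_0\,\Ee[t\wedge\tau\wedge\sigma_\delta]\le\Phi(|x-y|)$, hence $\Pp(\tau\wedge\sigma_\delta>t)\le\Phi(|x-y|)/(\varepsilon_0 t)$, and, using that $U$ overshoots $B(0,\delta)$ only boundedly so that $g(U_{\sigma_\delta})\ge\Phi(\delta)$ on $\{\sigma_\delta\le\tau\}$, the escape estimate $\Pp(\sigma_\delta<\tau)\le\Phi(|x-y|)/\Phi(\delta)$. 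A strong-Markov/renewal argument over the successive excursions of $U$ outside $B(0,\delta)$, combined with the fact that hypothesis a) forces the coupling to be successful (so that $\sup_{|u|\le2\delta}\Pp_u(\tau>s)\to0$ as $s\to\infty$), then upgrades these two bounds to $\Pp(\tau>t)\le c\,(1\wedge t^{-1})\,\Phi(|x-y|)$ for all $t>0$ as soon as $|x-y|\le\delta$. Dividing by $\Phi(|x-y|)$ and letting $y\to x$ finishes the proof.

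The delicate step is the jump-part estimate in the second paragraph: the parameters $\eta$ (resp.\ $\kappa$) and the constant $c$ in $\Phi$ have to be matched so that the jump part is bounded by exactly $-2/d$ (resp.\ $-1/2$) --- the threshold dictated by hypothesis b) --- which in turn forces one to keep careful track of the $\I_{(0,1)}(|\cdot|)$--corrections in \eqref{cp1}, and, in part~\ref{T1-b}, to notice and exploit the exact cancellation between the two refined channels. A second, more technical, point is the control of $U$ after it leaves $B(0,\delta)$, which is what the $t$--uniform decay rests on.
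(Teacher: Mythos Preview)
Your coupling strategy and the core Lyapunov estimate are exactly what the paper does: the jump-part bounds you sketch are precisely Lemmas~\ref{L1} and~\ref{L2}, including the symmetrisation in part~\ref{T1-a} and the cancellation of the gradient-correction terms between the two refined channels in part~\ref{T1-b}. One small imprecision: ``concavity'' alone does not give the second-difference bound $\Phi(r+\delta)+\Phi(r-\delta)-2\Phi(r)\le\Phi''(2r)\,\delta^2$; you also need that $\Phi''$ is \emph{increasing} (equivalently, that $\psi$ is increasing), cf.\ \eqref{e:est1}. The paper states this hypothesis explicitly.

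The real divergence is in your last paragraph. Once you have the two estimates $\Pp(\tau\wedge\sigma_\delta>t)\le\Phi(|x-y|)/(\varepsilon_0 t)$ and $\Pp(\sigma_\delta<\tau)\le\Phi(|x-y|)/\Phi(\delta)$, the paper finishes with the elementary inclusion
\[
\{\tau>t\}\subset\{\tau\wedge\sigma_\delta>t\}\cup\{\sigma_\delta<\tau\},
\]
which immediately yields $\Pp(\tau>t)\le\Phi(|x-y|)\bigl[\tfrac{1}{\varepsilon_0 t}+\tfrac{1}{\Phi(\delta)}\bigr]$ and hence the result after dividing by $\Phi(|x-y|)$. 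No strong-Markov or renewal argument, no control of $U$ after it leaves $B(0,\delta)$, and no appeal to the coupling being ``successful'' is needed. Your proposed renewal step is both unnecessary and, as written, a genuine gap: the claim that ``hypothesis a) forces the coupling to be successful (so that $\sup_{|u|\le 2\delta}\Pp_u(\tau>s)\to 0$)'' is unjustified --- condition a) only guarantees that $\Phi$ is finite --- and without any dissipativity of $b$ outside $B(0,\delta)$ there is no reason the excursions of $U$ should ever return. Drop that step and use the union bound instead.
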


\begin{remark}
a) Recently, the authors of \cite{KS1} introduced the \emph{local coupling property} for Markov processes and proved that it is equivalent to the following condition
\begin{equation}\label{e:t1}
    \lim_{y\to x}\|p_t(x,\cdot)-p_t(y,\cdot)\|_{\var}=0
    \quad\text{for all $x\in \rd$ and $t>0$},
\end{equation}
where $p_t(x,\cdot)$ is the transition probability of the Markov process $X:=(X_t)_{t\geq 0}$. Let $(P_t)_{t\geq 0}$ be the transition semigroup associated of the process $X$. Since
\begin{gather*}
    \|p_t(x,\cdot)-p_t(y,\cdot)\|_{\var}
    = \sup_{\|f\|_\infty\leq 1}|P_tf(x)-P_tf(y)|,
\end{gather*}
\eqref{e:t1} implies that $(P_t)_{t\geq 0}$ is a strong Feller semigroup, i.e.\ for any $t>0$, $P_t$ maps the set of bounded measurable functions into the set of bounded continuous functions. On the other hand, using \cite[Chapter 1, Propositions 5.8 and 5.12]{Rev}, we can deduce \eqref{e:t1} from the strong Feller property. Therefore, the local coupling property and the strong Feller property coincide for Markov semigroups.

\medskip\noindent
b) Since we have $\Phi(0)=0$, $\Phi'(r)>0$ and $\Phi''(r)<0$ on $(0,\infty)$, $(x,y)\mapsto \Phi(|x-y|)$ is a distance function in $\rd$. Therefore, Theorem~\ref{T1} guarantees the regularity of the semigroup associated with the SDE \eqref{s1} which in turn implies the strong Feller property.

\medskip\noindent
c) If $b\equiv 0$, i.e.\ if $X=Z$ is a pure jump L\'evy process, the conditions (\ref{e:tss}.b) and (\ref{e:tss2}.b) are trivially satisfied. Theorem~\ref{T1}.\ref{T1-b} seems to be new even for L\'evy processes. If  $X=Z$  is a rotationally symmetric L\'evy process, Theorem~\ref{T1}.\ref{T1-a} also extends \cite[Theorem 2.2]{KS}, where only the one-dimensional case is discussed.

\medskip\noindent
d) According to \cite[Example 1.2]{Lwcw}, Theorem~\ref{T1}.\ref{T1-b} holds for any L\'evy measure $\nu$ satisfying
\begin{gather*}
    \nu(dz)
    \geq \I_{(0,1]}(z_1)\frac{c}{|z|^{d+\alpha}}\,dz
\end{gather*}
for some $\alpha\in (0,2)$ and $c>0$. If we take, for example, $\nu(dz)=\I_{(0,1]}(z_1)\frac{c}{|z|^{d+\alpha}}\,dz$, then $\nu$ is a L\'evy measure with zero symmetric part, and such settings are not covered by \cite[Theorem 7]{KS1} which treats only the one-dimensional case.
\end{remark}

In the proof of Theorem~\ref{T1}.\ref{T1-a} we will apply the coupling operator $\widetilde L$ from Section~\ref{subse1} with $\eta= \frac 12$. We begin with the following simple estimate.
\begin{lemma}\label{L1}
    Let $\widetilde L$ be the coupling operator given by the jump system \eqref{basic-coup-5} with $\eta= \frac 12$. Pick $f\in C[0,2]\cap C^2(0,2]$ such that $f(0)=0$, $f'\geq 0$, $f''\leq 0$ and $f''$ is increasing on $(0,2]$. For any $x,y\in \rd$ with $0<|x-y|\leq 1$,
    \begin{gather*}
        \widetilde L f(|x-y|)
        \leq f'(|x-y|) \frac{\scalp{b(x)-b(y)}{x-y}}{|x-y|} + \frac{2}{d}f''(2|x-y|) \int_{\{|z|\leq |x-y|/2\}}|z|^2\,\nu(dz).
    \end{gather*}
\end{lemma}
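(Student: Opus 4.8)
The plan is to apply the coupling operator $\widetilde L$ from \eqref{basic-coup-5} with $\eta=\frac12$ directly to the radial function $g(x,y):=f(|x-y|)$, and to exploit the geometry of reflection. Writing $r=|x-y|$ and $e=(x-y)/|x-y|$, the key observation is that when a reflection jump $z\mapsto R_{x,y}(z)$ occurs (which, by the choice $\eta=\tfrac12$, only happens for $|z|\le r/2$), the displacement of $x-y$ is $z-R_{x,y}(z)=\frac{2\scalp{e}{z}}{1}\,e\cdot\frac{1}{|x-y|}\cdot|x-y|$; more precisely $x+z-(y+R_{x,y}(z)) = (x-y) + (z-R_{x,y}(z)) = (x-y)+2\scalp{e}{z}\,e$, so the new distance is $|r + 2\scalp{e}{z}|$ (the reflection only changes the component of the difference vector along $e$, doubling the contribution of $z$ in that direction). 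For the synchronous part ($|z|>r/2$) the difference $x-y$ is unchanged, so those terms contribute nothing to $\widetilde L g$ except through the drift. Thus $\widetilde L g(x,y)$ reduces to the drift term $f'(r)\scalp{b(x)-b(y)}{x-y}/r$ plus a one-dimensional jump integral against $\nu$ restricted to $\{|z|\le r/2\}$.

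First I would carefully compute the drift part: $\scalp{\nabla_x g}{b(x)}+\scalp{\nabla_y g}{b(y)} = f'(r)\scalp{e}{b(x)-b(y)} = f'(r)\scalp{b(x)-b(y)}{x-y}/r$, which is exactly the first term in the claimed bound. Next I would handle the jump integral. On $\{|z|\le r/2\}$ we have $|z|<1$ and also $|R_{x,y}(z)|=|z|<1$, so the $\I_{(0,1)}$ truncation terms are present for both coordinates; the gradient correction is $-\scalp{\nabla_x g}{z}-\scalp{\nabla_y g}{R_{x,y}(z)} = -f'(r)\scalp{e}{z} + f'(r)\scalp{e}{R_{x,y}(z)} = -f'(r)\scalp{e}{z-R_{x,y}(z)} = -2f'(r)\scalp{e}{z}$. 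Setting $t=\scalp{e}{z}\in[-|z|,|z|]$, the integrand becomes $f(|r+2t|)-f(r)-2f'(r)t$. By symmetry of $\nu$ (rotational symmetry) I can pair $z$ with its reflection $R_{x,y}(z)$, i.e. replace $t$ by $-t$, to symmetrize and kill the linear term: the integrand averages to $\frac12[f(|r+2t|)+f(|r-2t|)]-f(r)$. Since $r\le 1$ and $|2t|\le 2|z|\le r\le 1$, we have $r\pm 2t\in[r-1,2]\subset(-?]$ — actually $r-2t\ge r-r = 0$ and $r+2t\le r+r=2r\le 2$, so both arguments lie in $[0,2]$ where $f$ is defined, and $|r\pm 2t|=r\pm 2t$.

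The heart of the estimate is then the elementary bound $\frac12[f(r+2t)+f(r-2t)]-f(r)\le 2t^2 f''(2r)$, valid for $0\le 2t\le r$. This follows from Taylor's theorem with integral remainder: $f(r+2t)+f(r-2t)-2f(r) = \int_0^{2t}(2t-s)[f''(r+s)+f''(r-s)]\,ds$; since $f''$ is increasing and $r+s, r-s$ lie in $[r-2t, r+2t]\subseteq[0,2r]$, we bound $f''(r\pm s)\le f''(2r)$ (using $f''\le 0$ requires care — note $f''$ increasing means $f''(2r)$ is the largest value of $f''$ on $[0,2r]$, and it is $\le 0$), giving $f(r+2t)+f(r-2t)-2f(r)\le f''(2r)\int_0^{2t}2(2t-s)\,ds = f''(2r)(2t)^2 = 4t^2 f''(2r)$, hence half of this is $2t^2 f''(2r)$. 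Then $\int_{\{|z|\le r/2\}} 2\scalp{e}{z}^2 f''(2r)\,\nu(dz)\le 2f''(2r)\int_{\{|z|\le r/2\}}|z|^2\,\nu(dz)$ would overshoot the constant, so instead I use rotational symmetry to replace $\scalp{e}{z}^2$ by its average $\frac1d|z|^2$ over the sphere: $\int_{\{|z|\le r/2\}}\scalp{e}{z}^2\,\nu(dz) = \frac1d\int_{\{|z|\le r/2\}}|z|^2\,\nu(dz)$. Since $f''(2r)\le 0$, multiplying by $2f''(2r)$ reverses nothing in the inequality direction for the upper bound, yielding the term $\frac{2}{d}f''(2r)\int_{\{|z|\le r/2\}}|z|^2\,\nu(dz)$. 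Combining with the drift term finishes the proof.

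The main obstacle I anticipate is bookkeeping the indicator functions $\I_{(0,1)}(|z|)$ and $\I_{(0,1)}(|R_{x,y}(z)|)$ correctly and confirming that on the reflection region $\{|z|\le r/2\}$ both truncations are ``on'' so that the gradient corrections combine into the clean linear term $-2f'(r)\scalp{e}{z}$ which then vanishes after symmetrization; one must also check that the synchronous jumps ($|z|>r/2$) genuinely contribute zero to $\widetilde L g$ beyond the drift (they do, since $z-z=0$ leaves $x-y$ fixed and the gradient corrections cancel identically), so no extra error term appears. A secondary subtlety is justifying the symmetrization rigorously: the pairing $z\leftrightarrow R_{x,y}(z)$ is $\nu$-measure-preserving on $\{|z|\le r/2\}$ because $\nu$ is rotationally symmetric and $R_{x,y}$ is an orthogonal involution preserving $\{|z|\le r/2\}$, but one should phrase this as a change of variables under a reflection rather than hand-wave it.
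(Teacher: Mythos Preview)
Your proposal is correct and follows essentially the same route as the paper: compute the drift term, observe that synchronous jumps contribute nothing, symmetrize the reflection integral over $\{|z|\le r/2\}$ to reduce to $\tfrac12[f(r+2t)+f(r-2t)]-f(r)$ with $t=\scalp{e}{z}$, bound this second difference by $2t^2 f''(2r)$ using monotonicity of $f''$, and then use rotational symmetry of $\nu$ to replace $\int\scalp{e}{z}^2\,\nu(dz)$ by $\tfrac1d\int|z|^2\,\nu(dz)$. The only cosmetic differences are that the paper phrases the symmetrization as $\widetilde Lf(|x-y|)=\widetilde Lf(|y-x|)$ (swapping $x\leftrightarrow y$) rather than invoking $R_{x,y}$-invariance of $\nu$, and derives the second-difference bound via the identity $f(r+\delta)+f(r-\delta)-2f(r)=\int_r^{r+\delta}\int_{s-\delta}^s f''(u)\,du\,ds\le f''(r+\delta)\delta^2$ instead of Taylor's formula with integral remainder; both arguments are equivalent.
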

\begin{proof}
Let $f\in C[0,2]\cap C^2(0,2]$ with $f(0)=0$ and $x,y\in \rd$ with $0<|x-y|\leq 1$.
\begin{align*}
    \widetilde L f(|x-y|)
    &= f'(|x-y|) \frac{\scalp{b(x)-b(y)}{x-y}}{|x-y|}\\
    &\qquad\mbox{}+\int_{\{|z|\leq |x-y|/2\}}\bigg[f(|(x-y)+(z-R_{x,y}(z))|)-f(|x-y|)\\
    &\qquad\qquad\qquad\mbox{}-f'(|x-y|)\frac{\scalp{x-y}{z}}{|x-y|}\I_{(0,1)}(|z|)\\
    &\qquad\qquad\qquad\qquad\qquad\mbox{}+f'(|x-y|)\frac{\scalp{x-y}{R_{x,y}(z)}}{|x-y|}\I_{(0,1)}(|z|)\bigg]\,\nu(dz),
\end{align*}
where we use $|R_{x,y}(z)|=|z|$. Observe that $\widetilde Lf(|x-y|) = \widetilde Lf(|y-x|)$ and $R_{xy}(z)=R_{yx}(z)$. This allows us to symmetrize the above expression and we get
\begin{align*}
    \widetilde L f(|x-y|)
    &= f'(|x-y|) \frac{\scalp{b(x)-b(y)}{x-y}}{|x-y|}\\
    &\qquad\qquad\mbox{}+\frac{1}{2}\int_{\{|z|\leq |x-y|/2\}}\bigg[f(|(x-y)+(R_{x,y}(z) -z)|)\\
    &\qquad\qquad\qquad\mbox{}+f(|(x-y)+(z-R_{x,y}(z))|)-2f(|x-y|) \bigg]\,\nu(dz)\\
    &=f'(|x-y|) \frac{\scalp{b(x)-b(y)}{x-y}}{|x-y|}\\
    &\qquad\qquad\mbox{}+\frac{1}{2}\int_{\{|z|\leq |x-y|/2\}}\bigg[f\left(|x-y|\left(1+\frac{2\scalp{x-y}{z}}{|x-y|^2}\right)\right)\\
    &\qquad\qquad\qquad\mbox{}+f\left(|x-y|\left(1-\frac{2\scalp{x-y}{z}}{|x-y|^2}\right)\right)-2f(|x-y|)\bigg]\,\nu(dz);
\end{align*}
to see the last equality, use that $|z|\leq |x-y|/2$.

We assume now, in addition, that $f'\geq 0$, $f''\leq 0$ and $f''$ is increasing. For any $\delta\in [0,r]$,
\begin{equation}\label{e:est1}
    f(r+\delta)+f(r-\delta)-2f(r)
    = \int_r^{r+\delta}\int_{s-\delta}^s f''(u)\,du\,ds
    \leq f''(r+\delta)\delta^2.
\end{equation}
Using again the fact that $\nu$ is rotationally symmetric, we get
\begin{align*}
    &\widetilde L f(|x-y|)\\
    &\leq f'(|x-y|) \frac{\scalp{b(x)-b(y)}{x-y}}{|x-y|}+ 2f''(2|x-y|)\int_{\{|z|\leq |x-y|/2\}}\frac{\scalp{x-y}{z}^2}{|x-y|^2}\,\nu(dz)\\
    &= f'(|x-y|) \frac{\scalp{b(x)-b(y)}{x-y}}{|x-y|} + 2f''(2|x-y|)\int_{\{|z|\leq |x-y|/2\}}|z_1|^2\,\nu(dz)\\
    &= f'(|x-y|) \frac{\scalp{b(x)-b(y)}{x-y}}{|x-y|} + \frac{2}{d}f''(2|x-y|)\int_{\{|z|\leq |x-y|/2\}}|z|^2\,\nu(dz).
\qedhere
\end{align*}
\end{proof}

\begin{proof} [Proof of Theorem~\ref{T1}.\ref{T1-a}]
Fubini's theorem shows
\begin{gather*}
    \Phi(r)
    = \int_0^{r}\int_u^1\frac{ds\,du}{\psi(s/4)}
    = \int_0^1\frac{s\wedge r}{\psi(s/4)}\,ds,\quad r>0.
\end{gather*}
Since  $r\mapsto \psi(r)$ is increasing, $\Phi$ is well defined under (\ref{e:tss}.a); moreover, $\Phi'(r)=\int_r^1\frac{ds}{\psi(s/4)}> 0$,  $\Phi''(r)=- \frac 1{\psi(r/4)}<0$, and $\Phi''$ is increasing since $r\mapsto \psi(r)$ is increasing. According to Lemma~\ref{L1} and (\ref{e:tss}.b), there exist constants $\epsilon_0\in (0,1]$ and $c_0>0$ such that for $x,y\in \rd$ with $0<|x-y|\leq \epsilon_0$,
\begin{equation}\label{e:sregu}
    \widetilde L \Phi(|x-y|)\leq -c_0.
\end{equation}
Let $(X_t,Y_t)_{t\geq 0}$ be the coupling process constructed at the end of Section~\ref{subse1}. Denote by $\widetilde\Pp^{(x,y)}$ and $\widetilde\Ee^{(x,y)}$ the probability law and the expectation of $(X_t,Y_t)_{t\geq 0}$ such that $(X_0,Y_0)=(x,y)$, respectively. For $\epsilon_0\in (0,1]$ as above and any $n\geq 1$ we set
\begin{align*}
    \sigma_{\epsilon_0}
    &:=\inf\left\{t\geq 0: |X_t-Y_t|>\epsilon_0\right\},\\
    \tau_n
    &:=\inf\left\{t\geq 0: |X_t-Y_t|\leq 1/n\right\}.
\end{align*}
It is clear that $\lim_{n\to \infty}\tau_n=\tau$, where $\tau$ is the coupling time.

Let
$x,y\in\rd$ with $0<|x-y|<\epsilon_0$ and choose $n$ so large that $|x-y|>1/n$. Because of the monotonicity of $\Phi$, Dynkin's formula and \eqref{e:sregu}, we get for all $t>0$,
\begin{align*}
  0
  &\leq \Phi(\epsilon_0)\Pp^{(x,y)}(\sigma_{\epsilon_0}<\tau_n\wedge t)\\
  &\leq\widetilde\Ee^{(x,y)}\Phi\left(|X_{t\wedge \tau_n\wedge \sigma_{\epsilon_0}}-Y_{t\wedge \tau_n\wedge \sigma_{\epsilon_0}}|\right)\\
  &=\Phi(|x-y|)+\widetilde\Ee^{(x,y)}\left(\int_0^{t\wedge \tau_n\wedge \sigma_{\epsilon_0}} \widetilde{L} \Phi\big(|X_{s}-Y_{s}|\big)\,ds\right)\\
  &\leq \Phi(|x-y|)-c_0\widetilde\Ee^{(x,y)}(t\wedge\tau_n\wedge \sigma_{\epsilon_0}).
\end{align*}
Rearranging this inequality and letting $t\to\infty$ yields
\begin{align*}
    c_0\widetilde\Ee^{(x,y)}(\tau_n\wedge \sigma_{\epsilon_0}) + \Phi(\epsilon_0)\Pp^{(x,y)}(\sigma_{\epsilon_0}<\tau_n)
    \leq \Phi(|x-y|).
\end{align*}
Therefore, we can use Markov's inequality and get
\begin{align*}
    \widetilde\Pp^{(x,y)}(\tau_n> t)
    &\leq \widetilde\Pp^{{(x,y)}}(\tau_n\wedge \sigma_{\epsilon_0}>t) + \widetilde\Pp^{{(x,y)}}(\sigma_{\epsilon_0}<\tau_n)\\
    &\leq \frac{\widetilde\Ee^{(x,y)}(\tau_n\wedge \sigma_{\epsilon_0})}{t}+\frac{\Phi(|x-y|)}{\Phi({\epsilon_0})}
    \leq \Phi(|x-y|)\left[\frac{1}{tc_0} + \frac{1}{\Phi({\epsilon_0})}\right].
\end{align*}
Letting $n\to \infty$, we find that
\begin{gather*}
    \widetilde\Pp^{(x,y)}(\tau> t)\leq \Phi(|x-y|)\left[\frac{1}{tc_0} + \frac{1}{\Phi({\epsilon_0})}\right].
\end{gather*}
Finally, we have for any $f\in B_b(\rd)$, $x\in \rd$ and $t>0$,
\begin{align*}
    \limsup_{y\to x} \frac{|P_tf(x)-P_tf(y)|}{\Phi(|x-y|)}
    &= \limsup_{y\to x} \frac{|\widetilde\Ee^{{(x,y)}}[f(X_t)-f(Y_t)]|}{\Phi(|x-y|)}\\
    &= \limsup_{y\to x} \frac{|\widetilde\Ee^{{(x,y)}}[(f(X_t)-f(Y_t))\I_{\{\tau>t\}}]| }{\Phi(|x-y|)}\\
    &\leq 2 \|f\|_\infty \limsup_{y\to x} \frac{\widetilde\Pp^{{(x,y)}}(\tau>t) }{\Phi(|x-y|)}\\
    &\leq 2 \left[\frac{1}{tc_0}+\frac{1}{\Phi({\epsilon_0})}\right]
    \leq c_1\left(1\wedge\frac{1}{t}\right).
\qedhere
\end{align*}
 \end{proof}

In order to prove Theorem~\ref{T1}.\ref{T1-b}, we use the refined basic coupling from Section~\ref{subsection2}.
\begin{lemma}\label{L2}
    Let $\kappa>0$ be the constant from \eqref{bas-1} and denote by $\widetilde L$ the coupling operator given by the jump system \eqref{bas-1}. Let $f\in C[0,2\kappa]\cap C^2(0,2\kappa]$ such that $f(0)=0$, $f'\geq 0$, $f''\leq 0$ and $f''$ is increasing on $(0,2\kappa]$. For all $x,y\in \rd$ with $0<|x-y|\leq \kappa$,
    \begin{gather*}
        \widetilde L f(|x-y|)
        \leq f'(|x-y|) \frac{\scalp{b(x)-b(y)}{x-y}}{|x-y|} + \frac{1}{2}\mu_{(x-y)}(\rd) |x-y|^2 f''(2|x-y|),
    \end{gather*}
    where $\mu_{x}(dz)=[\nu\wedge (\delta_{x} \ast\nu)](dz)$ for all $x\in \rd$.
\end{lemma}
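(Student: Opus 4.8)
The plan is to compute $\widetilde L f(|x-y|)$ directly from the jump system \eqref{bas-1}, following the same strategy as in Lemma~\ref{L1}: write out the three contributions to the generator, symmetrize the first two (which carry half the intensity $\mu_{(y-x)_\kappa}$ resp. $\mu_{(x-y)_\kappa}$ each) using the fact that on $\{|x-y|\le\kappa\}$ we have $(x-y)_\kappa = x-y$ (so \eqref{bas-1} reduces to \eqref{bas-2}), and finally estimate the resulting second difference with the convexity inequality \eqref{e:est1}. The synchronous part (third row of \eqref{bas-2}) contributes nothing to $\widetilde L f(|x-y|)$ since the distance $|x-y|$ is unchanged by a simultaneous translation of both coordinates, and the corresponding gradient-correction terms cancel in pairs; this is the same mechanism as in the reflection case.

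Concretely, first I would note that applying $\widetilde L$ to the radial function $(x,y)\mapsto f(|x-y|)$ kills the synchronous jump term and leaves the drift term $f'(|x-y|)\scalp{b(x)-b(y)}{x-y}/|x-y|$ plus two jump integrals. In the first row of \eqref{bas-2} the post-jump distance is $|(x+z)-(y+z+(x-y))| = 0$, so that term contributes $\frac12\int_\rd [f(0) - f(|x-y|) - (\text{gradient correction})]\,\mu_{y-x}(dz) = -\frac12 f(|x-y|)\mu_{y-x}(\rd)$ up to the gradient-correction piece; in the second row the post-jump distance is $|(x+z) - (y+z+(y-x))| = 2|x-y|$, contributing $\frac12\int_\rd[f(2|x-y|) - f(|x-y|) - (\text{correction})]\,\mu_{x-y}(dz)$. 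Next I would deal with the $\I_{(0,1)}(\cdot)$ gradient-correction terms: the jumps here are $z$ for the $X$-coordinate and $z\pm(x-y)$ for the $Y$-coordinate, and the correction terms are linear in these, so after integrating against $\mu_{y-x}$ resp. $\mu_{x-y}$ and using that $\mu_{y-x}$ and $\mu_{x-y}$ have the same total mass and are related by $z\mapsto z+(x-y)$ (indeed $\mu_{x-y} = \mu_{y-x}\circ(\text{translation})$, reflecting \eqref{e:cou}), one checks these corrections combine into a finite, controllable quantity; in fact since $\mu_{x-y}$ is a finite measure by \eqref{e:finitemeasure}, there is no singularity and the cleanest route is to observe that the symmetrized second-difference form makes the correction terms cancel outright, exactly as the $\scalp{x-y}{z}$ and $\scalp{x-y}{R_{x,y}(z)}$ terms cancelled after symmetrization in Lemma~\ref{L1}.

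After symmetrization the jump part becomes
\[
    \frac12\,\mu_{x-y}(\rd)\bigl[f(2|x-y|) + f(0) - 2f(|x-y|)\bigr]
    = \frac12\,\mu_{x-y}(\rd)\bigl[f(2|x-y|) - 2f(|x-y|)\bigr],
\]
using $f(0)=0$ and $\mu_{x-y}(\rd)=\mu_{y-x}(\rd)$. Applying \eqref{e:est1} with $r = \delta = |x-y|$ (legitimate since $2|x-y|\le 2\kappa$ lies in the interval where $f$ is $C^2$ with $f''$ increasing) gives $f(2|x-y|) + f(0) - 2f(|x-y|) \le f''(2|x-y|)\,|x-y|^2$, and since $\mu_{x-y}(\rd)\ge 0$ this yields the claimed bound
\[
    \widetilde L f(|x-y|) \le f'(|x-y|)\,\frac{\scalp{b(x)-b(y)}{x-y}}{|x-y|} + \frac12\,\mu_{x-y}(\rd)\,|x-y|^2 f''(2|x-y|).
\]

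The main obstacle — really the only point requiring care — is the bookkeeping of the $\I_{(0,1)}$ gradient-correction terms: one must verify that they genuinely drop out under symmetrization (or, failing that, that they are absorbed without affecting the stated inequality), since the $Y$-jumps $z+(x-y)$ and $z+(y-x)$ are not symmetric individually and the indicator cutoffs $\I_{(0,1)}(|z+(x-y)|)$ versus $\I_{(0,1)}(|z|)$ differ. The resolution is that $\mu_{y-x}$ and $\mu_{x-y}$ are finite measures (by \eqref{e:finitemeasure}), so all these integrals are absolutely convergent and the correction terms for the two symmetric halves are exact negatives of each other after the change of variables relating $\mu_{y-x}$ to $\mu_{x-y}$; this is the jump-system analogue of the cancellation $+\frac{\scalp{x-y}{R_{x,y}(z)}}{|x-y|} - \frac{\scalp{x-y}{z}}{|x-y|}$ exploited in Lemma~\ref{L1}. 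Everything else is the convexity estimate \eqref{e:est1}, already in hand.
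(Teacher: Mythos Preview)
Your proposal is correct and follows essentially the same route as the paper's proof: reduce \eqref{bas-1} to \eqref{bas-2} on $\{|x-y|\le\kappa\}$, observe that the synchronous row contributes nothing, compute the two remaining jump contributions to obtain $\tfrac12\mu_{x-y}(\rd)\,[f(2|x-y|)-2f(|x-y|)]$, and finish with \eqref{e:est1} at $r=\delta=|x-y|$. The paper writes this down directly as an equality without commenting on the $\I_{(0,1)}$ gradient-correction terms; your identification of the cancellation mechanism (the pushforward relation $\mu_{x-y}=\mu_{y-x}\circ(z\mapsto z+(y-x))$, which is exactly \eqref{e:cou} in this setting, pairs each correction term from row~1 with the opposite-signed term from row~2) is the justification the paper suppresses.
\qed
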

\begin{proof}
If $x,y\in \rd$ satisfy $0<|x-y|\leq \kappa$, then we have $(x-y)_\kappa=(x-y)$. Using the jump system \eqref{bas-1}, we get for any $x,y\in \rd$ with $0<|x-y|\leq \kappa$
\begin{align*}
    \widetilde Lf(|x-y|)
    &= f'(|x-y|) \frac{\scalp{b(x)-b(y)}{x-y}}{|x-y|}\\
    &\quad\mbox{}+\left[\frac{1}{2} \mu_{(y-x)}(\rd)\left(f(2|x-y|)-f(|x-y|)\right) - \frac{1}{2}\mu_{(y-x)}(\rd) f(|x-y|)\right]\\
    &= f'(|x-y|) \frac{\scalp{b(x)-b(y)}{x-y}}{|x-y|}\\
    &\quad\mbox{} + \frac{1}{2}\mu_{(x-y)}(\rd)\left[f(2|x-y|)-2f(|x-y|)\right],
\end{align*}
where we use the identity $\mu_{(x-y)}(\rd)=\mu_{(y-x)}(\rd)$.
This, together with the assumptions on $f$ and \eqref{e:est1}, yields the assertion.
\end{proof}

Theorem~\ref{T1}.\ref{T1-b} can now be proved along the same lines as Theorem~\ref{T1}.\ref{T1-a}: all we have to do is to use Lemma~\ref{L2} instead of Lemma~\ref{L1}. Since the arguments are similar, we leave the details to the reader.

\section{Optimal coupling operators \& coupling operators for SDEs with multiplicative L\'evy noise}\label{section4}

\subsection{Optimal coupling operators for L\'evy processes}

As mentioned earlier, the construction of \eqref{basic-coup-1} is motivated by optimal coupling of Gaussian distributions, see \cite[Theorem 1.4]{GM} or \cite[Section 3]{HS}, and reflection coupling for Brownian motion, see \cite[Section 2]{LR} or \cite[Section 2]{HS}.  Folklore wisdom from the theory of optimal transport tells us that one should use most of the common mass of two probability distributions if one wants to obtain a coupling with nice properties.
In this sense, the first row in \eqref{basic-coup-1} is a natural choice, see \cite[Section 2.1]{M15} for further details,  and this is also the underlying idea of basic coupling \eqref{bas-2}.
The problem is, how one should use the remaining mass.

If the L\'evy measure is rotationally symmetric, we use reflection of the remaining mass, cf.~the second row of \eqref{basic-coup-1}. This approach is essentially due to \cite[Section 2.2]{M15}, where $q_0(|z|)=q(|z|)\I_{\{|z|< m\}}$ for some large $m\gg 1$. For L\'evy processes which are subordinate to a Brownian motion, \cite{B} shows that this type of coupling with $m=\infty$ is indeed a Markovian maximal coupling. For further discussions in this direction, we refer our readers to the end of \cite[Section 5]{B}.

In a general setting, one can try to use independent coupling with the remaining mass; this approach often has poor properties. Intuitively, a much better solution should be to couple the remaining mass synchronously, but it turns out that such a construction does not produce a coupling. In the preliminary construction of the refined basic coupling \eqref{bas-2}, the two marginal processes jump to the same place only with \emph{half} of the maximal probability (see the first row in \eqref{bas-2}), while with the other half we perform  a transformation which doubles the distance between the two marginal processes (see the second row in \eqref{bas-2}). With the remaining probability we let the marginal processes move synchronously, see the third row in \eqref{bas-2}. With a view towards the refined basic coupling \eqref{bas-2}, it seems sometimes to be better not to have the marginals jump to the same place with the \emph{maximal possible} probability, but to use some of the mass for coupling the marginals in a more convenient way.

In what follows, we use the concept of an \emph{optimal coupling operator} which was introduced in \cite[Definition 2.3]{Chen1} to study optimal coupling for L\'evy noise. Let $f$ be a non-decreasing and concave function $[0,\infty)$ such that $f(0)=0$, a coupling operator $\bar{L}_f$ is said to be \emph{$f$-optimal}, if
\begin{gather*}
    \bar{L}_f f(|x_1-x_2|)=\inf_{\widetilde L} \widetilde L f(|x_1-x_2|),\quad x_1,x_2\in \rd,
\end{gather*}
where the infimum ranges over all coupling operators $\widetilde L$. In particular, the definition above means that the infimum is attained for the coupling operator $\bar L_f$. In contrast to the diffusion or the birth-and-death process case -- see \cite[Theorem~3.2 and Section~5]{Chen1} -- there seems to be no general structure formula for coupling operators associated with L\'evy noise. This is the reason, why we concentrate on the three couplings presented in Section~\ref{section2}: we will compare $\widetilde Lf$ ($f$ is a non-decreasing and concave function on $[0,\infty)$ such that $f(0)=0$) with the three coupling operators mentioned in Section~\ref{section2}.

Let $Z=(Z_t)_{t\geq 0}$ be a rotationally symmetric  pure jump  L\'evy process whose L\'evy measure is of the form $\nu(dz)=q(|z|)\,dz$ for some measurable function $q(r)\ge0$, $r>0$.  We consider the following two cases.

\medskip\noindent
\textbf{Case 1:} (Jumps of infinite range)
Denote by $\widetilde L_{r}$ the ``coupling-by-reflection'' operator with $\eta=\infty$, cf.~Section~\ref{subse1}, and by $\widetilde L_{r,b}$ the ``combined reflection-and-basic'' coupling operator constructed in Section~\ref{section2-3} with $q_0(|z|)=q(|z|)$. For any $f\in C[0,\infty)\cap C^2(0,\infty)$ with $f(0)=0$ and $f'\geq 0$, and any $x,y\in \rd$ with $x\neq y$, we have
\begin{align*}
    &\widetilde L_{r} f(|x-y|)\\
    &=\int_\rd\bigg[f\left(|x-y+z-R_{x,y}(z)|\right)-f(|x-y|) -f'(|x-y|)\frac{\scalp{x-y}{z}}{|x-y|}\I_{(0,1)}(|z|)\\
    &\qquad\qquad+f'(|x-y|)\frac{\scalp{x-y}{R_{x,y}(z)}}{|x-y|}\I_{(0,1)}(|z|) \bigg]\,q(|z|)\,dz.
\end{align*}
Since $\widetilde L_{r} f(|x-y|)=\widetilde L_{r} f(|y-x|)$ and $R_{x,y}(z)=R_{y,x}(z)$, we can symmetrize the above expression, and get
\begin{align*}
    &\widetilde L_{r} f(|x-y|)\\
    &=\frac{1}{2}\int_\rd
    \left[f\left(|x-y+z-R_{x,y}(z)|\right)+f\left(|x-y+R_{x,y}(z)-z|\right) -2f(|x-y|)\right]q(|z|)\,dz.
\end{align*}
For the other coupling operator we get
\begin{align*}
    &\widetilde L_{r,b} f(|x-y|)\\
    &= \int_\rd\bigg[f\left(|x+z-y-z-(x-y)|\right) - f(|x-y|) - f'(|x-y|)\frac{\scalp{x-y}{z}}{|x-y|}\I_{(0,1)}(|z|)\\
    &\quad\mbox{}+f'(|x-y|)\frac{\scalp{x-y}{z+(x-y)}}{|x-y|}\I_{(0,1)}(|z+(x-y)|)\bigg] q(|z|)\wedge q(|z+x-y|)\,dz\\
    &\quad\mbox{}+\int_\rd \bigg[f\left(|x+z-y-R_{x,y}(z)|\right) - f(|x-y|) - f'(|x-y|)\frac{\scalp{x-y}{z}}{|x-y|}\I_{(0,1)}(|z|)\\
    &\quad\mbox{}+f'(|x-y|)\frac{\scalp{x-y}{R_{x,y}(z)}}{|x-y|}\I_{(0,1)}(|z|) \bigg] \big[q(|z|)-q(|z|)\wedge q(|z+x-y|)\big]\,dz\\
    &=-\int_\rd f\left(|x+z-y-R_{x,y}(z)|\right) q(|z|) \wedge q(|z+x-y|)\,dz\\
    &\quad\mbox{}+\int_\rd \bigg[f\left(|x+z-y-R_{x,y}(z)|\right) - f(|x-y|) - f'(|x-y|)\frac{\scalp{x-y}{z}}{|x-y|}\I_{(0,1)}(|z|)\\
    &\quad\mbox{}+f'(|x-y|)\frac{\scalp{x-y}{R_{x,y}(z)}}{|x-y|}\I_{(0,1)}(|z|) \bigg]\,q(|z|)\,dz.
\end{align*}
In the last equality we use that $q(z)\wedge q(z+x-y)\,dz$, $x\neq y$, is a finite measure on $\rd$ as well as the following identity which one easily checks using (in the last line) the change of variables $z\rightsquigarrow R_{x,y}(z)$ and $R_{x,y}(x-y)=R_{x,y}(y-x)$:
\begin{align*}
    &\int\frac{\scalp{x-y}{z+(x-y)}}{|x-y|}\I_{(0,1)}(|z+(x-y)|)\, q(|z|)\wedge q(|z+x-y|)\,dz\\
    &\quad=\int\frac{\scalp{x-y}{z}}{|x-y|}\I_{(0,1)}(|z|) \, q(z-x+y)\wedge q(|z|)\,dz \\
    &\quad= \int\frac{\scalp{x-y}{R_{x,y}(z)}}{|x-y|} \I_{(0,1)}(|z|)\, q(|z|)\wedge q(|z+x-y|)\,dz.
\end{align*}
Using symmetry as above, we can swap the roles of $x$ and $y$ in the second term of the right hand side for $\widetilde L_{r,b} f(|x-y|)$, and get
\begin{align*}
    &\widetilde L_{r,b} f(|x-y|)\\
    &=-\int_\rd f\left(|x+z-y-R_{x,y}(z)|\right) q(|z|) \wedge q(|z+x-y|)\,dz\\
    &\quad\mbox{}+\frac{1}{2}\int_\rd \big[f\left(|x -y+z-R_{x,y}(z)|\right) + f\left(|x-y+R_{x,y}(z)-z|\right) - 2f(|x-y|)\big]\, q(|z|)\,dz.
\end{align*}
Comparing the formulae for $\widetilde L_{r} f(|x-y|)$ and $\widetilde L_{r,b} f(|x-y|)$ we see that
\begin{gather*}
    \widetilde L_{r,b} f(|x-y|)
    \leq \widetilde L_{r} f(|x-y|),\quad x,y\in \rd.
\end{gather*}

\medskip\noindent
\textbf{Case 2:} (Jumps of finite range)
Denote by $\widetilde L_{r}$ the ``coupling-by-reflection'' operator with $\eta=\frac 12$, cf.~Section~\ref{subse1}, and by $\widetilde L_{b}$ the ``refined-basic-coupling'' operator constructed in the same way as \eqref{bas-1} with $\nu_1(dz)=\nu_2(dz)= \frac{1}{2}\I_{\{|z|\leq |x-y|/2\}}\,q(|z|)\,dz$ in Section~\ref{subsection2}. For  any $f\in C[0,\infty)\cap C^2(0,\infty)$ with $f(0)=0$ and $f''\leq 0$, we get with the symmetrization argument for $\widetilde L_{r}$ from Case~1 that
\begin{align*}
    \widetilde L_{r} f(|x-y|)
    = \frac{1}{2}\int_{\{|z|\leq |x-y|/2\}}\bigg[&f\left(|x-y|+\frac{2\scalp{x-y}{z}}{|x-y|}\right)\\
    &\mbox{}+f\left(|x-y|-\frac{2\scalp{x-y}{z}}{|x-y|}\right)-2f(|x-y|)\bigg]\,q(|z|)\,dz
\end{align*}
and
\begin{align*}
    \widetilde L_{b} f(|x-y|)
    &=\frac{1}{2}\big[f\left(|x-y|-|x-y|\wedge\kappa\right) +f\left(|x-y|+|x-y|\wedge\kappa\right)-2f(|x-y|)\big]\\
    &\qquad\mbox{}\times \int [q(|z|)\I_{\{|z|\leq |x-y|/2\}}] \wedge [q(|z+x-y|)\I_{\{|z+x-y|\leq |x-y|/2\}}]\,dz.
\end{align*}
By the mean value theorem and $f''\leq 0$, it is easy to see that
\begin{gather*}
    \widetilde L_{r} f(|x-y|)\leq 0.
\end{gather*}
On the other hand, we have for all $x,y,z\in \rd$
\begin{equation}\label{e:zero}
\begin{split}
    0
    &\leq [q(|z|)\I_{\{|z|\leq |x-y|/2\}}] \wedge [q(|z+x-y|)\I_{\{|z+x-y|\leq |x-y|/2\}}] \\
    &\leq [q(|z|)\I_{\{|z|\leq |x-y|/2\}}] \wedge [q(|z+x-y|)\I_{\{|z|\geq |x-y|/2\}}(z)] = 0
    \quad \text{a.e.}
\end{split}\end{equation}
This shows that
\begin{gather*}
    \widetilde L_{b} f(|x-y|)=0,\quad x,y\in\rd.
\end{gather*}

Let $\widetilde L_{r,b}$ denote the ``combined reflection-and-basic'' coupling operator constructed in Section~\ref{section2-3} with $q_0(|z|)=\I_{\{|z|\leq |x-y|/2\}}\,q(|z|)$.  Using \eqref{e:zero} it is easy to see that
\begin{align*}
    &\widetilde L_{r,b} f(|x-y|)\\
    &=\int \bigg[f\left(|x+z-y-z-(x-y)|\right) - f(|x-y|) - f'(|x-y|)\frac{\scalp{x-y}{z}}{|x-y|}\I_{(0,1)}(|z|)\\
    &\qquad\mbox{} + f'(|x-y|)\frac{\scalp{x-y}{z+x-y}}{|x-y|}\I_{(0,1)}(|z+(x-y)|)\bigg]\\
    &\qquad\quad\mbox{}\times \left[q(|z|)\I_{\{|z|\leq |x-y|/2\}}\right]\wedge \left[q(|z+x-y|)\I_{\{|z+x-y|\leq |x-y|/2\}}\right]dz\\
    &\quad\mbox{}+\int_{\{|z|\leq |x-y|/2\}}  \bigg[f\left(|x-y|+\frac{2\scalp{x-y}{z}}{|x-y|}\right) - f(|x-y|)\\
    &\qquad\mbox{}-f'(|x-y|)\frac{\scalp{x-y}{z}}{|x-y|}\I_{(0,1)}(|z|) + f'(|x-y|)\frac{\scalp{x-y}{R_{x,y}(z)}}{|x-y|}\I_{(0,1)}(|z|)\bigg]\\
    &\qquad\quad\mbox{}\times \left(q(|z|)-[q(|z|)\I_{\{|z|\leq |x-y|/2\}}]\wedge [q(|z+x-y|)\I_{\{|z+x-y|\leq |x-y|/2\}}]\right)dz\\
    &=\int_{\{|z|\leq |x-y|/2\}}  \bigg[f\left(|x-y| + \frac{2\scalp {x-y}{z}}{|x-y|}\right) - f(|x-y|)\\
    &\quad\mbox{}-f'(|x-y|)\frac{\scalp{x-y}{z}}{|x-y|}\I_{(0,1)}(|z|) + f'(|x-y|)\frac{\scalp{x-y}{R_{x,y}(z)}}{|x-y|}\I_{(0,1)}(|z|) \bigg]q(|z|)\,dz.
\end{align*}
The symmetrization argument used in Case 1 allows us to change the roles of $x$ and $y$, and we get
\begin{align*}
    \widetilde L_{r,b} f(|x-y|)
    &=\frac{1}{2}\int_{\{|z|\leq |x-y|/2\}} \bigg[f\left(|x-y|+\frac{2\scalp{x-y}{z}}{|x-y|}\right)\\
    &\qquad\qquad\qquad\qquad +f\left(|x-y|-\frac{2\scalp{x-y}{z}}{|x-y|}\right)-2f(|x-y|)\bigg]\,q(|z|)\,dz\\
    &=\widetilde L_r f(|x-y|).
\end{align*}

If $f\in C[0,\infty)\cap C^2(0,\infty)$ with $f(0)=0$ and $f''\leq 0$, these calculations show that
\begin{gather*}
    \widetilde L_{r,b} f(|x-y|)
    = \widetilde L_{r} f(|x-y|)
    \leq \widetilde L_{b} f(|x-y|)=0
    \quad\text{ for all $x,y\in \rd$}.
\end{gather*}

\begin{remark}
    L\'evy processes which are subordinate to a Brownian motions are particular examples of rotationally symmetric L\'evy processes. Thus, the conclusion of Case~1 shows that the coupling defined by \eqref{basic-coup-1} is, for subordinated Brownian motions and from a coupling operator point-of-view, \emph{optimal} among the three couplings mentioned in Section~\ref{section2}.

    On the other hand, one essential point of the proof in Case 2 uses the fact that, when L\'evy jump is finite range, the jumping density disappears, $q(|z|)\wedge q(|z+x-y|)=0$, for $x,y\in \rd$ which are sufficiently distant, i.e.\ $|x-y|\gg 1$. In this case, the second row of \eqref{basic-coup-1} disappears, and \eqref{basic-coup-1} essentially becomes \eqref{basic-coup-5}. This illustrates the advantage of the refined basic coupling \eqref{bas-1}: it applies both to finite range jumps and non-necessarily rotationally symmetric L\'evy processes.
\end{remark}

\subsection{Coupling operators for SDEs with multiplicative L\'evy noise}\label{subsec-multi}
It is possible to extend the coupling idea from the previous sections to SDEs with multiplicative L\'evy noise
\begin{equation}\label{s1m}
    d X_t
    = b(X_{t})\,dt + \sigma(X_{t-})\,dZ_t,\quad X_0=x\in \rd,
\end{equation}
where $b: \rd\to\rd$ is measurable, $\sigma:\rd\to\rd\times\rd$ is continuous, and $Z=(Z_t)_{t\geq 0}$ is a pure jump L\'{e}vy process on $\rd$   with L\'evy measure $\nu$.   Since the drift term $b$ is always coupled synchronously, we only need to consider how to couple multiplicative L\'evy noise.  The multiplicative term $\sigma(x)$ affects the jumps in a way that the jump height $\Delta Z_t = Z_t - Z_{t-}$ is not simply added to $X_{t-}$ (as in the additive noise case) but it is first transformed by the matrix $\sigma(X_{t-})$ and then added. This means that in our coupling scheme \eqref{basic-coup-4} we have to
\begin{gather*}
    \text{replace\ \ $\Psi_i(z)$\ \  by\ \  $\sigma(y)\Psi_i(z)$}.
\end{gather*}
More precisely, for any $1\leq i<n+1\leq \infty$, let $\Psi_i:\rd\to\rd$ be a bijective continuous map and $\nu_i$ a nonnegative measure on  $(\rd,\Bscr(\rd))$ such that $\sum_{i=1}^n \nu_i\leq \nu$. Now we change the general formula \eqref{basic-coup-4} for the basic coupling with additive noise to
\begin{equation}\label{basic-coup-455}
(x,y)\longmapsto
    \begin{cases}
        (x+\sigma(x)z, y+\sigma(y)\Psi_i(z)), & \mu_{\nu_i,\Psi_i}(dz)\text{\ \ for\ \ } 1\leq i< n+1;\\
        (x+\sigma(x)z, y+\sigma(y)z), & \big(\nu -\sum_{i=1}^n \mu_{\nu_i,\Psi_i} \big)(dz).
    \end{cases}
\end{equation}
As in the proof of Proposition~\ref{P:co}, we can verify that \eqref{basic-coup-455} determines a coupling operator for the infinitesimal generator of the SDE \eqref{s1m} if \eqref{e:cou} is satisfied. It is reasonable that in the case of multiplicative L\'evy noise, the maps $\Psi_i(z)$ should depend on the coefficient $\sigma(x)$. In view of the results from Sections~\ref{subse1} and~\ref{subsection2}, let us discuss the following examples.

\begin{example}[Coupling by reflection for multiplicative L\'{e}vy noise]
Assume that $Z$ is a pure jump rotationally symmetric L\'evy process with L\'evy measure $\nu$. Let $n=2$, $\nu_1(dz)=\nu_2(dz)=\frac{1}{2}\I_{\{|z|\leq \eta |x-y|\}}\,\nu(dz)$ for some $\eta\in (0,\infty]$,
\begin{gather*}
    \Psi_1(z)
    = \sigma(y)^{-1}\sigma(x) R_{x,y}(z)
 \et
    \Psi_2(z)
    = \Psi_1^{-1}(z)
    = R_{x,y}(\sigma(x)^{-1}\sigma(y) z);
\end{gather*}
where $R_{x,y}(z)$ is  the reflection operator   defined in \eqref{e:effrr}. It is easy to see from the rotational invariance of the L\'evy measure $\nu$ and the properties of $R_{x,y}(z)$, that setting $\sigma(x)=\id_{d}$ reduces  \eqref{basic-coup-455}  to \eqref{basic-coup-5}.
\end{example}

\begin{example}[Refined basic coupling for multiplicative L\'{e}vy noise]
Let $Z$ be an arbitrary pure jump L\'evy process, $n=2$ and $\nu_1=\nu_2= \nu/2$. For any $\kappa>0$ and $x,y\in \rd$ with $x\neq y$,
let
\begin{gather*}
    \Psi_1(z)
    = \Psi_{\kappa,x,y}(z)
    := \sigma(y)^{-1}\big(\sigma(x)z+(x-y)_\kappa\big)
\intertext{and}
    \Psi_2(z)
    = \Psi^{-1}_1(z)
    = \sigma(x)^{-1}\big(\sigma(y)z-(x-y)_\kappa\big).
\end{gather*}
Again, if $\sigma(x)=\id_{d}$,  \eqref{basic-coup-455}  becomes
\eqref{bas-1}. This coupling was first introduced in \cite{LW18} when studying the regularity of semigroups and the ergodicity of the solution to the SDE \eqref{s1m}.
 \end{example}

\begin{ack}
    The research of Jian Wang is supported by the National Natural Science Foundation of China (Nos.\ 11522106 and 11831014), the Fok Ying Tung Education Foundation (No.\ 151002), the Alexander von Humboldt foundation, the Program for Probability and Statistics: Theory and Application (No.\ IRTL1704) and the Program for Innovative Research Team in Science and Technology in Fujian Province University (IRTSTFJ).
\end{ack}

\end{document}